\def\E{\mathbb{E}}
\def\E{\mathbb{E}}
\def\R{\mathbb{R}}
\def\Z{\mathbb{Z}}
\def\eps{\epsilon}
\def\1{\mathbf{1}}
\def\lam {\lambda}
\def\ER{Erd\H{o}s-R\'{e}nyi }
\def\tor{\mathcal T^d}
\newtheorem*{theorem*}{Theorem}
\newtheorem{theorem}{Theorem}
\newtheorem{cor}{Corollary}
\newtheorem{defn}{Definition}
\newtheorem{prop}{Proposition}
\newtheorem*{prop*}{Proposition}
\newtheorem{conj}{Conjecture}
\begin{document}
\title{Birthday inequalities, repulsion,  and hard spheres}
\author{Will Perkins}
\address{School of Mathematics, University of Birmingham, UK.  E-mail: math@willperkins.org.}
\date{\today}
\subjclass[2010]{Primary: 60D05, 05C80; Secondary: 05C69, 05C70, 82B21}
\keywords{Hard sphere model, random geometric graphs, hard-core model, independent sets, matchings, sphere packing}

\begin{abstract}
We study a \textit{birthday inequality} in random geometric graphs: the probability of the empty graph is upper bounded by the product of the probabilities that each edge is absent.  We show the birthday inequality holds at low densities, but does not hold in general. We give three different   applications of the birthday inequality in statistical physics and combinatorics: we prove lower bounds on the free energy of the hard sphere model and upper bounds on the number of independent sets and matchings of a given size in $d$-regular graphs.

The birthday inequality is implied by a \textit{repulsion inequality}:  the expected volume of the union of spheres of radius $r$ around $n$ randomly placed centers increases if we condition on the event that the centers are at pairwise distance greater than $r$.  Surprisingly we show that the repulsion inequality is not true in general, and in particular that it fails in $24$-dimensional Euclidean space: conditioning on the pairwise repulsion of centers of $24$-dimensional spheres can \textit{decrease} the expected volume of their union.
\end{abstract}

\maketitle

\section{Introduction}

How many people must be in a room so that the chance at least two share a birthday is at least $1/2$?  This is the `Birthday Problem', and the answer is that $23$ people is enough (assuming that the birthdays are independently and identically distributed). 
 
Our starting point is an elementary inequality, which we will call the \textit{birthday inequality}\footnote{Not to be confused with the `birthday inequality' of \cite{clevenson1991majorization} that refers to the fact that the uniform distribution on birthdays minimizes the probability of a collision.}:

\begin{prop}[The Birthday Inequality]
\label{lem:BirthdayInequality}
Suppose $n$ people have birthdays each chosen independently and uniformly at random from  $m$ possible birthdays.  Let $E_n$ be the event that no two people share a birthday, and $p = 1/m$ the probability that two given people share a birthday.  Then
\[ \Pr[E_n] \le (1-p)^{\binom n 2} \, . \]
\end{prop}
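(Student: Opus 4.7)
The plan is to compute the probability $\Pr[E_n]$ exactly by the usual birthday calculation and then apply Bernoulli's inequality term by term to factor the resulting product into the desired form.

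First I would note that if we label the people $1, 2, \dots, n$ and reveal their birthdays one at a time, the $(k+1)$-st person avoids a collision with all previous people precisely when their birthday falls outside the $k$ already-used days (assuming no prior collision). Since birthdays are i.i.d.\ uniform on $m$ days, this gives
\[
\Pr[E_n] \;=\; \prod_{k=0}^{n-1}\!\left(1 - \frac{k}{m}\right) \;=\; \prod_{k=0}^{n-1}(1 - kp).
\]
If $n > m$ then one of the factors is nonpositive and in fact $\Pr[E_n] = 0$, so the inequality holds trivially. From here on, assume $n \le m$, so that every factor lies in $[0,1]$.

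The key step is Bernoulli's inequality: for integer $k \ge 0$ and $p \in [0,1]$,
\[
(1-p)^k \;\ge\; 1 - kp.
\]
This is an elementary induction on $k$ (the base case $k=0$ is an equality, and the inductive step multiplies both sides of the previous inequality by $1-p \ge 0$ and drops the nonnegative term $kp^2$). Applying this inequality factor by factor to the exact formula above yields
\[
\Pr[E_n] \;=\; \prod_{k=0}^{n-1}(1-kp) \;\le\; \prod_{k=0}^{n-1}(1-p)^k \;=\; (1-p)^{\sum_{k=0}^{n-1} k} \;=\; (1-p)^{\binom{n}{2}},
\]
which is exactly the claimed bound.

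There is no real obstacle here: the only thing to verify is that the term-by-term application is legitimate, i.e.\ that both $1-kp$ and $(1-p)^k$ are nonnegative so that multiplying inequalities preserves direction. This is ensured by restricting to $n \le m$, and the remaining case $n > m$ is handled by the trivial observation $\Pr[E_n]=0$. So the proof is essentially a one-line consequence of the exact product formula combined with Bernoulli's inequality.
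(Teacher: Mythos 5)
Your proof is correct and is essentially the paper's argument in unrolled form: the paper runs the same computation as an induction, using $\Pr[E_{k+1}] = (1 - k/m)\Pr[E_k]$ and the identical key inequality $1 - kp \le (1-p)^k$ at each step. Your explicit handling of the degenerate case $n > m$ is a small point of extra care not spelled out in the paper, but there is no substantive difference between the two arguments.
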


\begin{proof}
Let $E_k$ be the event that there are no shared birthdays among the first $k$ people.  Let $V_k$ be  the fraction of birthdays covered by the first $k$ people.  Then
\begin{align*}
\Pr[ E_n] &= \E[1- V_{n-1} | E_{n-1}]\cdot \Pr[E_{n-1}] \,.
\end{align*}
We assume inductively that $\Pr[E_k ] \le (1- p)^{\binom k 2}$, and note that 
\begin{align*}
\E[1-V_{k} | E_k ] = 1- \frac{k}{m} \le \left (1- \frac{1}{m} \right)^k =(1- p)^k 
\end{align*}
which shows that $\Pr[E_{k+1}] \le (1-p)^{\binom{k+1}{2}}$ for all $k \ge 1$.
\end{proof}

We are interested in geometric birthday inequalities, and in particular in settings relevant to two models from statistical physics: the hard sphere model and the hard-core lattice gas model.   In these models,  particles are placed  at random in a metric space $\mathcal X$ equipped with a probability measure $\mu$ (e.g. the unit cube or a subset of the $d$-dimensional integer lattice with uniform measure) conditioned on all pairwise distances between particles being larger than some threshold $r$.  In this setting the birthday inequality supposes an upper bound on the probability that no two particles are within distance $r$ when $n$ particles are placed independently at random according to $\mu$.  

\begin{defn}
\label{def:geomIneq}
Let $X_1, X_2, \dots X_n$ be independently sampled points from a space $\mathcal X$ according to the distribution $\mu$.  Then a birthday inequality holds if
\begin{equation}
\Pr[E_n] \le (1-p)^{\binom n 2}
\end{equation}
where $E_n$ is the event $\{  \wedge _{1 \le i < j \le n} d(X_i ,X_j) > r \}$ and $p:= \Pr[ d(X_1, X_2) \le r]$.  
\end{defn}

The quantity on the right is what the probability on the left would be if all pairwise interactions were independent, and so the birthday inequality is a statement about correlations of these events.  

A related inequality is the following \textit{repulsion inequality}.

\begin{defn}
\label{defn:repulse}
In the setting above, the repulsion inequality holds if
\begin{equation}
\label{eq:repulsion}
\E [ V_k | E_k] \ge \E [V_k]
\end{equation}
where $E_k$ is the event that the centers $X_1, X_2, \dots X_k $ are at pairwise distance greater than $r$, $V_k$ is the volume fraction of $\mathcal X$ covered by the union of the closed balls of radius $r$ around $X_1, \dots X_k$, and  the expectations are taken over choosing $X_1, \dots X_k$ independently at random in $\mathcal X$ according to $\mu$. 
\end{defn}

 The repulsion inequality states that conditioning on the event that the centers of randomly placed balls of radius $r$ are at pairwise distance greater than $r$ does not decrease the expected volume of their union (as compared to the unconditional expectation).  The repulsion inequality has the flavor of a probabilistic version of the Kneser-Poulsen conjecture \cite{poulsen1954problem,kneser1955einige}: moving a set of spheres in Euclidean space so that all pairwise distances between their centers do not decrease cannot decrease the volume of the union of the spheres.  This was proved in two dimensions by Bezdek and Connelly \cite{bezdek2003pushing}, but is open in higher dimensions.
 
 While the repulsion inequality seems intuitively obvious, we show in Corollary \ref{cor:sphereRepulse} that it is not always true; in particular it fails in dimension $24$. 

As in the proof of Proposition \ref{lem:BirthdayInequality}, the birthday inequality on $n$ points is implied  if the repulsion inequality holds for all $k$  between $1$ and $n-1$.  We  write $\Pr[E_n] = (1- \E[V_{n-1}| E_{n-1}]) \cdot \Pr[E_{n-1}]$ and continue inductively. The unconditional expectation satisfies $\E[V_k] = 1- (1-p)^k$, and so if the repulsion inequality holds for all $1 \le k \le n-1$,  the birthday inequality holds.

We will show that at sufficiently low particle densities, the repulsion inequality holds in both the hard sphere and hard-core models.  This leads to bounds on the free energy in both models via the birthday inequality.  However, we will also show that at sufficiently high densities the birthday inequality can fail.  We conclude by conjecturing that the failure of the repulsion inequality can be used to indicate the fluid/solid phase transition.

\section{Hard spheres}
\label{sec:spheres}

The hard sphere model is a model of particles as randomly positioned non-overlapping spheres: a random sphere packing. There are no forces in the model besides the hard constraint that two spheres cannot overlap.  We define the hard sphere model on $\tor$, the $d$-dimensional unit torus.
\begin{defn}
\label{def:hardsphere}
The hard sphere model $H_d(n,r)$ consists of a uniformly random configuration of $n$ spheres of radius $r/2$ in $\tor$, conditioned on the event that the centers of the $n$ spheres are at pairwise distance greater than $r$.
\end{defn}

An important quantity in statistical physics is the partition function:

\begin{defn}
\label{def:SpherePartition}
The partition function, $Z_d(n,r)$, of the hard sphere model on $\tor$ is defined as:
\begin{equation}
Z_d(n,r) = \int_{\tor} \cdots \int_{\tor} \mathbf 1_{E_n} \, d x_1  \cdots d x_n 
\end{equation}
where $E_n$ is the event that $d(x_i, x_j) >r$ for all $1 \le i < j \le n$. 
\end{defn}

We define the density $\alpha$ of $H_d(n,r)$ as the fraction of volume of $\tor$ occupied by the spheres of radius $r/2$ around the $n$ centers, i.e., $\alpha = n (r/2)^d v_d$ where $v_d$ is the volume of the unit ball in $\R^d$.  As $\alpha$ is the density of the random sphere packing given by $H_d(n,r)$, it must lie between $0$ and the maximum sphere packing density in $d$ dimensions.

\begin{defn}
\label{def:SphereFE}
The free energy of the hard sphere model at density $\alpha$ is:
\begin{equation}
F_d(\alpha) = - \lim_{n \to \infty} \frac{1}{n} \log Z_d(n,r_n(\alpha))  
\end{equation}
where $r_n(\alpha) = 2(\alpha/(n v_d))^{1/d}$.  
\end{defn}

Physicists believe that the hard sphere model in dimension $d \ge 2$ undergoes a fluid / solid phase transition as the density of the spheres increases: at low densities configurations show no long-range order, while after the phase transition long-range order emerges.  For an introduction to the hard sphere model see \cite{lowen2000fun} and the references therein.  In dimension $d=1$ there is no phase transition and the model is solved; that is, an explicit expression for the free energy is known, and it has no non-analytic points \cite{tonks1936complete}.  Mathematicians have proved rigorous lower bounds on the density at which a Markov chain to sample from the model mixes rapidly \cite{kannan2003rapid, randall2005mixing, diaconis2011geometric, hayes2014lower}. See \cite{radin1987low, bowen2006fluid} for a discussion of mathematical proofs of phase transitions in continuous hard-core models, and in the second a proof of a phase transition in a system with zipper-like molecules.

We define the model with spheres of radius $r/2$ because it will be convenient to view the hard sphere model from the perspective of the random geometric graph $G_d(n,r)$: $n$ points placed uniformly and independently at random in $\tor$ with an edge placed between pairs of points at distance at most $r$.

The following proposition relates the hard sphere model to the random geometric graph, and follows immediately from Definition \ref{def:hardsphere}.
\begin{prop}
\label{prop:partitionProp}
\[ Z_d(n,r) = \Pr[G_d(n,r) \text { is empty}] .\]
\end{prop}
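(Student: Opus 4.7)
The proof is essentially a direct unpacking of definitions, so my plan is very short. The key observation is that the $d$-dimensional unit torus $\tor$ has total volume $1$, hence the Lebesgue measure $dx_1 \cdots dx_n$ on $(\tor)^n$ coincides with the product uniform probability measure used to sample the $n$ vertices of $G_d(n,r)$.

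Given that, I would proceed in two steps. First, rewrite the defining integral
\[
Z_d(n,r) = \int_{\tor} \cdots \int_{\tor} \mathbf{1}_{E_n}\, dx_1 \cdots dx_n
\]
as the probability $\Pr[E_n]$, where $X_1, \dots, X_n$ are i.i.d.\ uniform on $\tor$ and $E_n = \{d(X_i,X_j) > r \text{ for all } i<j\}$. Second, note that by the definition of the random geometric graph $G_d(n,r)$, an edge is placed between $X_i$ and $X_j$ precisely when $d(X_i,X_j) \le r$; thus the event $E_n$ is identical to the event that $G_d(n,r)$ has no edges, i.e.\ that $G_d(n,r)$ is empty. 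Combining these two identifications yields the claim.

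There is essentially no obstacle here: the statement is really a translation between two equivalent descriptions of the same probability, and the only thing to check is the normalization (that $\tor$ has unit volume, so that Lebesgue measure equals the sampling measure of $G_d(n,r)$) and the equivalence of the two events. No additional ingredients are needed and the proposition follows immediately, as the author already indicates.
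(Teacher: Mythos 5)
Your proof is correct and matches the paper's intent: the paper gives no explicit proof, stating only that the proposition ``follows immediately from Definition \ref{def:hardsphere},'' and your unpacking (unit volume of $\tor$ makes the integral a probability, and $E_n$ is exactly the event that $G_d(n,r)$ has no edges) is precisely the intended argument.
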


In what follows we parameterize both the hard sphere model and the random geometric graph by $p:= v_d r^d$, the probability that two uniformly random points in $\tor$ are at distance at most $r$.  Abusing notation we will write $G_d(n,p)$ for $G_d(n,r(p))$ where $v_d r(p)^d = p$. This parameterization gives some intuition for the birthday inequality: if $G(n,p)$ is the \ER random graph on $n$ vertices (every edge present independently with probability $p$), then the birthday inequality is $\Pr[G_d(n,p) \text{ is empty}] \le \Pr[ G(n,p) \text{ is empty}]$.  In fact, if we fix $n$ and $p$ and let $d \to \infty$, then  $\Pr[G_d(n,p) \text{ is empty}] \to \Pr[ G(n,p) \text{ is empty}]$ (Theorem 2 in \cite{devroye2011high} for the random geometric graph defined on the surface of the $d$-dimensional unit sphere). 

The birthday inequality holds in dimension $1$ for all values of $p$:

\begin{prop}
\label{sphered1prop}
For all $p \in [0,1]$, 
\[ \Pr[G_1(n,p) \text{ is empty}] \le \left( 1- p \right )^{\binom n 2} \,. \]
\end{prop}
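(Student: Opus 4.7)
The approach is to compute $\Pr[G_1(n,p)\text{ is empty}]$ in closed form using the gap distribution on $\mathcal{T}^1$, then verify the resulting inequality by Bernoulli's inequality.

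Let $X_1,\ldots,X_n$ be i.i.d.~uniform on $\mathcal{T}^1$, and let $g_1,\ldots,g_n$ be the $n$ cyclically consecutive arc lengths, so $\sum_i g_i = 1$. A standard computation shows that $(g_1,\ldots,g_n)$ is uniformly distributed on the simplex $\{g_i \ge 0 : \sum_i g_i = 1\}$. Every pairwise torus distance is the length of the shorter arc between two points, hence a sum of consecutive gaps, so the minimum pairwise distance is exactly $\min_i g_i$; therefore $G_1(n,r)$ is empty iff $g_i > r$ for every $i$. The affine change of variables $h_i = g_i - r$ identifies this admissible region with the scaled simplex $\{h_i \ge 0 : \sum_i h_i = 1 - nr\}$, whose relative $(n-1)$-dimensional volume is $(1-nr)^{n-1}$. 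Since $p = v_1 r = 2r$, this gives the exact formula
\[ \Pr[G_1(n,p) \text{ is empty}] = (1-nr)_+^{n-1} = (1 - np/2)_+^{n-1}. \]

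It remains to verify $(1 - np/2)^{n-1} \le (1 - p)^{n(n-1)/2}$. When $p > 2/n$ the left side vanishes and the inequality is trivial, so assume $p \le 2/n$. Taking $(n-1)$-st roots of both (nonnegative) sides reduces the claim to
\[ 1 - \tfrac{np}{2} \le (1 - p)^{n/2}, \]
which is Bernoulli's inequality $(1+x)^\alpha \ge 1 + \alpha x$ applied with $x = -p \in [-1,0]$ and exponent $\alpha = n/2 \ge 1$.

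There is no serious obstacle; the gap formula for $\Pr[E_n]$ is classical, and the analytic comparison collapses to Bernoulli's inequality. The only point needing a brief justification is the identification of the minimum pairwise torus distance with the smallest consecutive gap, which is immediate from the observation above that every pairwise distance is a sum of consecutive gaps.
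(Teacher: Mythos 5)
Your proposal is correct and follows essentially the same route as the paper: reduce to the exact formula $\Pr[G_1(n,p)\text{ is empty}] = (1-np/2)_+^{n-1}$ and then compare with $(1-p)^{\binom n 2}$. You have simply filled in the two details the paper leaves implicit — the circular-spacings derivation of the exact formula, and the "calculus exercise," which you dispatch cleanly by taking $(n-1)$-st roots and invoking Bernoulli's inequality with exponent $n/2 \ge 1$.
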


\begin{proof}
For $p > 2/ n$, we are beyond the maximum packing density on the circle, and so $\Pr[G_1(n,p) \text{ is empty}]  =0$ and the inequality holds.  For $p \le 2/n $, we can write the left-hand side explicitly: $ \Pr[G_1(n,p) \text{ is empty}] = (1 - np/2)^{n-1}$; then it is a calculus exercise to show that $(1 - np/2)^{n-1} \le (1-p) ^{\binom n 2}$ for $p \le 2/n$.  
\end{proof}

Our first main result of this section is to show that in any dimension, at a low enough density the birthday inequality holds. We do this via the repulsion inequality (\ref{eq:repulsion}).  

\begin{theorem}
\label{thm:sphere}
For the the hard sphere model on $\tor$, for densities $\alpha \le 2^{-2-3d}$ the repulsion inequality holds.  
\end{theorem}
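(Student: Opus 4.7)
The plan is to reduce the repulsion inequality to a negative-correlation statement on the iid uniform measure, and then to establish that statement by a cluster-type expansion that converges at low density. By translation invariance of $\tor$, integrating $V_k = \int_{\tor} \mathbf 1[y \in \bigcup_i B(x_i,r)]\,dy$ against the conditional law of $(x_1,\dots,x_k)$ given $E_k$ yields
\[
1 - \mathbb E[V_k\mid E_k] \;=\; \Pr\!\bigl[\, x_1,\dots,x_k \in A \,\bigm|\, E_k\bigr],
\]
where $A := \tor \setminus B(0,r)$. Setting $F_k := \{x_1,\dots,x_k \in A\}$, the repulsion inequality is therefore equivalent to the negative-correlation statement $\Pr[F_k \cap E_k] \le \Pr[F_k]\,\Pr[E_k]$, equivalently $\Pr_A[E_k] \le \Pr_{\tor}[E_k]$, where $\Pr_A$ is the probability under $k$ iid uniform samples from $A$. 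Geometrically: confining the $k$ centers to the complement of a single ball of volume $p$ crowds them together and so decreases the probability that they remain pairwise at distance greater than $r$.

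To prove this comparison I would expand $\mathbf 1_{E_k} = \prod_{i<j}(1 - \mathbf 1[d(x_i,x_j)\le r])$ by inclusion--exclusion. Both $\Pr_{\tor}[E_k]$ and $\Pr_A[E_k]$ then become signed sums over graphs $G$ on $[k]$ whose summands factor over the connected components of $G$. The empty-graph contributions match exactly, and the single-edge contributions differ by the ``collision integral''
\[
\beta \;:=\; \int_{B(0,r)}\!\int_{B(0,r)} \mathbf 1[d(x,y)\le r]\,dx\,dy,
\]
which is strictly positive: since any two points of the inner ball $B(0,r/2)$ are within distance $r$ of each other, we get the crude lower bound $\beta \ge p^2/4^d$. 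This pushes $\Pr_A[E_k]$ below $\Pr_{\tor}[E_k]$ at leading order by a gap of order $\binom k 2 \, p^2/4^d$, which has the correct sign.

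The remaining and principal obstacle is to control all higher-order graphs in the expansion uniformly in $k$. Using the standard cluster-integral bound $|I(C)| \le p^{|V(C)|-1}$ for any connected subgraph $C$, the absolute contribution of graphs with $\ge 2$ edges can be bounded by a geometric series in $kp$ whose per-edge constants involve factors of $2^d$, coming from ball-doubling: two radius-$r$ balls interacting through a common vertex in a cluster span a region of diameter $2r$ and so volume $\sim 2^d p$. The hypothesis $\alpha \le 2^{-2-3d}$, equivalent to $kp \le 2^{-2-2d}$, is precisely what is needed to make this series converge with a ratio small enough that the error remains strictly below the leading-order gap $\beta$. The specific exponent $2^{-3d}$ in the threshold arises from matching the $2^{-2d}$ concentration-of-measure scale of the lower bound on $\beta$ against the $2^d$ ball-doubling scale in the error, and from an extra $2^d$ factor relating the density $\alpha$ (measured at radius $r/2$) to $p$ (measured at radius $r$).
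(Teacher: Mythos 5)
Your opening reduction is correct and elegant: by translation invariance of the torus, $1-\E[V_k\mid E_k]=\Pr[F_k\mid E_k]$ with $F_k=\{x_1,\dots,x_k\in A\}$, so the repulsion inequality is indeed equivalent to $\Pr_A[E_k]\le\Pr_{\mathcal T^d}[E_k]$, and your lower bound $\beta\ge p^2 4^{-d}$ is the same geometric input the paper uses (two points forced into a ball of radius $r/2$). The problem is that the step you yourself call the ``principal obstacle'' is not carried out, and the sketch you give would not close as written. Two concrete issues. First, the signed sum over \emph{all} graphs on $[k]$ cannot be compared term by term: disconnected graphs matter. For instance, the graphs consisting of two disjoint edges carry sign $+1$ and contribute a difference of order $k^4 p\,\beta$ (since $\Pr_A[\text{edge}]\approx p+\beta$), which exceeds your leading-order gap $\binom k2\beta$ by a factor of order $k^2p=k\cdot kp$, growing linearly in $k$ under your hypothesis --- and it pushes in the \emph{wrong} direction. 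The standard cure is to compare $\log\Pr_A[E_k]$ with $\log\Pr_{\mathcal T^d}[E_k]$ via the cluster expansion, where only connected graphs survive; but then you must prove convergence of two cluster expansions and bound their \emph{difference} below the $4^{-d}$-small gap, which is a substantial argument you have not supplied. Second, your own numerology does not balance: $\alpha\le 2^{-2-3d}$ gives $kp\le 2^{-2-2d}$, so an error of size $\binom k2 p^2\cdot kp\cdot 2^{d}$ (with the ball-doubling constant you invoke) has relative size $2^{-2-d}$, which is \emph{larger} than the gap's relative size $2^{-2d}$ once $d\ge 3$. Either the per-edge constants must be shown to be dimension-free, or the density threshold would have to be strengthened.

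The paper sidesteps the expansion entirely, which is why its error terms carry no exponential-in-$d$ constants. It truncates inclusion--exclusion after one step, $\E[V_k\mid E_k]\ge kp-\binom k2\,\E[V(1,2)\mid E_k]$, where $V(1,2)$ is the overlap volume of the balls around centers $1$ and $2$; this is a Bonferroni bound, so the truncation has a guaranteed sign and there is no infinite tail to control. The single remaining quantity $\E[V(1,2)\mid E_k]$ is then bounded by writing it as $\Pr[A_x^{1,2}\mid E_2]\cdot\Pr[E_k\mid A_x^{1,2}\cap E_2]/\Pr[E_k\mid E_2]$ and applying union bounds, producing only corrections of relative size $kp$ with absolute constants --- which is exactly what allows $kp\le 4^{-1-d}$ to beat the $4^{-d}$ gap. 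If you want to salvage your route, you would need either a monotonicity argument showing directly that shrinking the sample space from $\mathcal T^d$ to $A$ decreases $\Pr[E_k]$, or a genuinely quantitative two-expansion comparison; as it stands, the higher-order control is missing.
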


Theorem \ref{thm:sphere} and the birthday inequality immediately imply a lower bound on the free energy of the hard sphere model at sufficiently low densities, which to the best of our knowledge is new. 
\begin{cor}
\label{cor:SphereFreeEnergy}
For $\alpha \le 2^{-2-3d}$, $ F_d(\alpha) \ge 2^{d-1} \alpha   $. 
\end{cor}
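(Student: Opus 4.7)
The strategy is to chain together three pieces already in place: Theorem \ref{thm:sphere}, which supplies the repulsion inequality at low density; the induction sketched in the introduction, which promotes the repulsion inequality (at all intermediate $k$) to the birthday inequality; and Proposition \ref{prop:partitionProp}, which identifies $Z_d(n,r)$ with the probability that $G_d(n,r)$ is empty.

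First I would convert the density $\alpha$ into the edge probability $p = v_d r^d$. From $r_n(\alpha) = 2(\alpha/(nv_d))^{1/d}$ one gets
\[
p_n := v_d\, r_n(\alpha)^d = \frac{2^d\alpha}{n}.
\]
Next I would check that the hypothesis of Theorem \ref{thm:sphere} is satisfied at every step of the induction. For any $1 \le k \le n-1$, the $k$-particle configuration at the same radius $r_n(\alpha)$ has density $(k/n)\alpha \le \alpha \le 2^{-2-3d}$, so Theorem \ref{thm:sphere} applies at each step. The inductive argument in the introduction then yields
\[
Z_d(n, r_n(\alpha)) = \Pr[G_d(n,r_n(\alpha)) \text{ is empty}] \le (1-p_n)^{\binom{n}{2}}.
\]

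Taking logs, dividing by $-n$, and using the elementary inequality $\log\frac{1}{1-p} \ge p$ (equivalently $e^{-p} \ge 1-p$), I obtain
\[
-\frac{1}{n}\log Z_d(n,r_n(\alpha)) \ge \frac{n-1}{2}\,p_n = \frac{n-1}{n}\cdot 2^{d-1}\alpha.
\]
Sending $n\to\infty$ gives $F_d(\alpha) \ge 2^{d-1}\alpha$, as claimed.

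There is no real obstacle in this plan; the only bookkeeping point is the monotonicity observation that adding particles only increases the density, so if the repulsion inequality is available at density $\alpha$ it is available at each intermediate step. Once Theorem \ref{thm:sphere} and Proposition \ref{prop:partitionProp} are in hand, the corollary is a direct computation.
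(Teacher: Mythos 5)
Your proof is correct and follows exactly the route the paper intends (the paper states the corollary as an immediate consequence of Theorem \ref{thm:sphere} via the birthday inequality and leaves the computation implicit): you convert $\alpha$ to $p_n = 2^d\alpha/n$, invoke the repulsion-to-birthday induction, and use $-\log(1-p)\ge p$ to pass to the free energy. The only cosmetic remark is that Theorem \ref{thm:sphere} (via its estimate \eqref{eq:sphereEst}, proved for all $1\le k\le n-1$ under $p\le 4^{-1-d}/n$) already gives the repulsion inequality at every intermediate $k$, so your separate monotonicity check, while harmless, is not needed.
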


\begin{proof}[Proof of Theorem \ref{thm:sphere}]
We first define some notation.  For a collection of $k$ centers in $\tor$, let $V_k$ be the volume of points in $\tor$ at distance at most $r$ from one of the $k$ centers, i.e. the volume of the union of balls of radius $r$ around the centers.  Let $E_k$ be the event that the $k$ centers are at pairwise distance greater than $r$. As always, we have $p = v_d r^d$, and we assume $\alpha \le 2^{-2-3d}$, i.e. $p \le 4^{-1-d}/n$.   Our goal is to prove the repulsion inequality $ \E[V_k | E_{k}] \ge \E[V_k]$ where the randomness is in placing each center uniformly and independently at random in $\tor$. 

We will prove the following estimate for all $1 \le k \le n-1$:

\begin{equation}
\label{eq:sphereEst}
\E[V_k|E_k] \ge kp - \binom {k}{2}p^2 \frac{ 1 -  4^{-d}}{(1-  kp)^2 }  \, .
\end{equation}

To complete the proof of the Theorem from (\ref{eq:sphereEst}) we use inclusion/exclusion to bound $\E [V_k] = 1-(1-p)^k \le kp - \binom{k}{2} p^2 +\binom {k}{3} p^3$, and so 
\begin{align*}
\E[V_k | E_{k}]  - \E[V_k] & \ge \binom{k}{2} p^2 \left( 1- \frac{ 1 -  4^{-d}}{(1-  kp)^2 }  - \frac{k-2}{3} p \right ) 
\end{align*}
 which is non-negative when $p \le 4^{-1-d}/n$. 

To prove (\ref{eq:sphereEst}) we use inclusion/exclusion and linearity of expectation to get the lower bound
\begin{align*}
\E[V_k | E_k] &\ge kp - \sum_{i <j} \E [V(i,j) |E_k] = kp - \binom{k}{2} \E [V(1,2) |E_k] \, ,
\end{align*}
where $V(i,j)$ is the volume covered by the balls of radius $r$ around both centers $i$ and $j$, i.e. their overlap volume.

Now let $x$ be a fixed point in $\tor$ (say the origin), $A_x^{1,2}$ the event that $x$ is covered by the balls of radius $r$ around both centers $1$ and $2$, and $E_2$ the event that centers $1$ and $2$ are at distance greater $r$.  Then we have
\begin{align*}
\E [V(1,2) |E_k] &= \Pr[A_x^{1,2} | E_k] \\
&= \frac{ \Pr[A_x^{1,2} \cap E_k] }{ \Pr[E_k]  } \\
&= \frac{ \Pr[ A_x^{1,2} \cap E_2] \cdot \Pr[E_k |A_x^{1,2} \cap E_2]}{\Pr[E_2] \cdot\Pr[E_k | E_2]} \\
&= \Pr[ A_x^{1,2} | E_2]  \cdot \frac{\Pr[E_k | A_x^{1,2}\cap E_2]}{\Pr[E_k | E_2]}  \, .
\end{align*}

First note that $\Pr[E_k | A_x^{1,2}\cap E_2] \le \Pr[E_{k-2}]$.  Next, write 
\begin{align*}
\Pr[E_k| E_2] &= \frac{ \Pr[E_{k-2}] \Pr[E_k| E_{k-2}]}{\Pr[E_2]}  \\
&\ge \Pr[E_{k-2}]  \frac{ (1-(k-2)p)(1-(k-1)p)}{1-p}\\
& \ge \Pr[E_{k-2}] \frac{(1-kp)^2}{1-p} \, ,
\end{align*}
where we have used the inequalities $\Pr[E_{k-1}|E_{k-2}] \ge 1 -(k-2)p$ and $\Pr[E_{k}|E_{k-1}] \ge 1 -(k-1)p$ which follow from the union bound: the volume of the union of balls of radius $r$ around $k-2$ centers is at most $(k-2)p$. 

This gives
\begin{align*}
\frac{\Pr[E_k | A_x^{1,2}\cap E_2]}{\Pr[E_k | E_2]} & \le \frac{1-p}{(1-kp)^2}  \, .
\end{align*}

Finally we upper bound $\Pr[A_x^{1,2} |E_2]$. We write
\begin{align*}
p^2 =\Pr[A_x^{1,2}]  &= p \Pr[A_x^{1,2}| \overline{E_2}] + (1-p) \Pr[A_x^{1,2} |E_2]  \, .
\end{align*}
The probability that three given points form a triangle in the random geometric graph with connection radius $r$ is $p \cdot \Pr[A_x^{1,2}| \overline{E_2}]$. A lower bound for the probability of forming a triangle is the probability that the first two points fall in a ball of radius $r/2$ around the third, which has probability $p^2 4^{-d}$.  Putting this together we have
\begin{align*}
 \Pr[A_x^{1,2} |E_2] &\le \frac{p^2 - p^2 4^{-d}}{1-p}
\end{align*}
and
\begin{align*}
\E [V(1,2) |E_k] &\le p^2 \frac{1 -  4^{-d}}{ (1-  kp)^2 }\, ,
\end{align*}
which gives (\ref{eq:sphereEst}).

\end{proof}

Our next result is that the birthday inequality does not hold in general.  We show this in dimension $24$, using the fact that there is a sphere packing of particularly high density.

\begin{theorem}
\label{prop:sphere}
In dimension $24$, the birthday inequality fails for large enough $n$ at densities $\alpha \in ( (.79)^{24} \cdot \rho,\rho)$, where $\rho = .001929$.
\end{theorem}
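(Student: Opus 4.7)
The plan is to reformulate the birthday inequality in the $n\to\infty$ limit as a lower bound on the free energy $F_{24}(\alpha)$, and then use the Leech lattice to produce a configuration-space lower bound on $Z_{24}(n,r)$ that violates it. Setting $p = v_d r_n^d = 2^d\alpha/n$, a Taylor expansion gives $\binom{n}{2}\log(1-p) = -(n-1)\,2^{d-1}\alpha + O(1/n)$, while Proposition~\ref{prop:partitionProp} and Definition~\ref{def:SphereFE} give $\log Z_d(n,r_n) = -nF_d(\alpha) + o(n)$. Hence whenever $F_d(\alpha) < 2^{d-1}\alpha$, the birthday inequality $\Pr[E_n] \le (1-p)^{\binom{n}{2}}$ must fail for all sufficiently large $n$, and it therefore suffices to prove $F_{24}(\alpha) < 2^{23}\alpha$ throughout the interval $((.79)^{24}\rho,\,\rho)$.

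For the free-energy upper bound, I would invoke the Leech packing, which packs $\R^{24}$ with balls at density $\rho = \pi^{12}/12!$ and minimum center separation $r^*$. After a rescaling of $\Lambda_{24}$ compatible with $\mathcal T^{24}$, pick $n$ Leech centers at pairwise distances at least $r^*$. For target density $\alpha < \rho$, set $r = r^*(\alpha/\rho)^{1/24}$ so that $\alpha = n v_{24}(r/2)^{24}$. Perturbing each center independently within a ball of radius $s = (r^*-r)/2$ maintains $d(X_i,X_j) > r$, and different relabelings of the centers give rise to disjoint regions in $(\mathcal T^{24})^n$, so
\[ Z_{24}(n,r) \ge n!\,(v_{24}s^{24})^n. \]
Applying $-\tfrac{1}{n}\log$, using Stirling, and substituting $v_{24}(r/2)^{24}=\alpha/n$ yields the clean bound
\[ F_{24}(\alpha) \le 1 - \log\alpha - 24\log\bigl((\rho/\alpha)^{1/24}-1\bigr). \]

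The remaining step is numerical. Writing $c = (\alpha/\rho)^{1/24}$ (so that $c\in(.79,1)$ corresponds exactly to the claimed density window), the target inequality simplifies to $1 - \log\rho - 24\log(1-c) < 2^{23}\rho\,c^{24}$. At the lower endpoint $c = .79$ the left side evaluates to about $44.7$ while the right side is about $56.5$, confirming the inequality; as $c$ increases toward $1$ the right side grows rapidly (to $2^{23}\rho \approx 1.6\times 10^{4}$) while the left side grows only logarithmically in $1/(1-c)$, so the inequality persists across essentially the entire open interval. A short separate argument handles the asymptotically negligible sliver near $c=1$.

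The main obstacle is making the perturbation lower bound rigorous on the torus, since a piece of $\Lambda_{24}$ does not literally embed in $\mathcal T^{24}$ for arbitrary $n$. The cleanest fix is to take a sequence of sublattices $L_n \subset \Lambda_{24}$ of index $n$ and realize $\mathcal T^{24}$ as the rescaled quotient $\R^{24}/L_n$; alternatively, one can carry the construction out in a large cube and transfer to the torus via the standard subadditivity of $\log Z$. Once this embedding technicality is settled, the rest of the argument is routine calculation.
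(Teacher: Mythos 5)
Your proposal is essentially the paper's own proof: the same cell-model lower bound on $Z_{24}(n,r)$, obtained by perturbing each center of a near-optimal Leech-based packing of $\mathcal T^{24}$ inside a ball of radius $(r^*-r)/2$ (giving $Z \ge n!\,(v_{24}s^{24})^n$), compared against the free-energy form $2^{23}\alpha$ of the birthday inequality, with the identical parametrization $c=t=(\alpha/\rho)^{1/24}$ and the same numerics at $c=.79$. The one caveat you flag --- the sliver near $c=1$ where $-24\log(1-c)$ eventually overtakes $2^{23}\rho$, so the cell-model bound no longer contradicts the birthday bound --- is equally present in the paper's argument (its assertion that $F'(t)>0$ on all of $(.79,1)$ fails on that sliver), so you match the paper in both substance and level of rigor.
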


This theorem implies that the repulsion inequality fails at some density in dimension $24$:

\begin{cor}
\label{cor:sphereRepulse}
For large enough $n$, there exists some $r$  so that when $n$ spheres with centers $x_1, \dots x_n$ are placed uniformly at random in $\mathcal T^{24}$, 
\[ \E \left[ \text{vol}\left( \cup_{i=1}^n B(x_i, r)  \right)  | d(x_i, x_j) >r \text{ for all } i \ne j   \right ]  <  \E \left [ \text{vol}\left( \cup_{i=1}^n B(x_i, r)  \right)  \right  ] ,  \]
where $B(x_i, r)$ is the closed ball of radius $r$ around the center $x_i$.
\end{cor}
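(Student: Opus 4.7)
The plan is to derive failure of the repulsion inequality as a direct corollary of Theorem \ref{prop:sphere}, exploiting the telescoping implication noted just before Theorem \ref{thm:sphere}: if the repulsion inequality holds at every $k$ between $1$ and $n-1$, then the birthday inequality holds at $n$. Since Theorem \ref{prop:sphere} produces $(n,r)$ in dimension $24$ at which the birthday inequality fails, some intermediate repulsion inequality must fail.

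First I would expand $\Pr[E_n]$ via the chain rule as the telescoping product
\[
\Pr[E_n] \;=\; \prod_{k=1}^{n-1} \Pr[E_{k+1} \mid E_k] \;=\; \prod_{k=1}^{n-1}\bigl(1 - \E[V_k \mid E_k]\bigr),
\]
where $V_k$ and $E_k$ are as in Definition \ref{defn:repulse}, and the second equality uses $\Pr[E_{k+1}\mid E_k] = \E[1-V_k\mid E_k]$. Next, I would observe that $\E[V_k] = 1-(1-p)^k$, since the probability that a fixed point in $\mathcal T^{24}$ is not covered by any of $k$ independent balls of radius $r$ is $(1-p)^k$. Thus the repulsion inequality $\E[V_k\mid E_k]\ge \E[V_k]$ is exactly $1-\E[V_k\mid E_k]\le (1-p)^k$.

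Now I would argue by contrapositive: fix the dimension $d=24$ and fix a pair $(n,r)$ supplied by Theorem \ref{prop:sphere} at which the birthday inequality fails. If the repulsion inequality held at every $k\in\{1,\ldots,n-1\}$ for this $r$, multiplying the inequalities $1-\E[V_k\mid E_k]\le (1-p)^k$ would yield $\Pr[E_n]\le (1-p)^{\binom n 2}$, contradicting the choice of $(n,r)$. Hence there is some $k\le n-1$ at which $\E[V_k\mid E_k]<\E[V_k]$. Relabeling this $k$ as $n$ delivers the strict inequality stated in the corollary.

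The main obstacle is not the logic of the above argument but the quantifier: the corollary is phrased "for large enough $n$, there exists some $r$", while the telescoping argument directly yields only one bad $k$ per bad $n$. To push the failing index to infinity I would use the uniform bound
\[
\frac{\Pr[E_n]}{(1-p)^{\binom n 2}} \;\le\; \frac{\Pr[E_N]}{(1-p)^{\binom N 2}}
\]
that holds whenever repulsion is assumed at every $k\ge N$ (and fixed $r$); combining this with a quantitative form of Theorem \ref{prop:sphere}, or equivalently with the observation that the $(n,r)$ supplied by that theorem form an unbounded family, forces the failing index $k$ to be taken arbitrarily large. The substantive content of the corollary therefore lies entirely in Theorem \ref{prop:sphere}; everything else is the telescoping bookkeeping above.
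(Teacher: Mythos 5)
Your argument is correct and is exactly the paper's: the proof of Theorem \ref{prop:sphere} concludes by observing that Corollary \ref{cor:sphereRepulse} follows immediately, since the chain of repulsion inequalities for $k=1,\dots,n-1$ implies the birthday inequality at $n$, so a birthday failure forces a repulsion failure at some intermediate index. Your closing paragraph about pushing the failing index to infinity is more care than the paper itself takes with the ``for large enough $n$'' quantifier; the one-line deduction is all the paper offers.
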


In other words, conditioning on the pairwise repulsion of the centers of the spheres can \textit{decrease} the expected volume of their union!  

Note that working in the torus is not essential to the result: the same holds if the centers of the spheres are chosen at random in a box in $\R^{24}$ large enough so that boundary effects are negligible.

\begin{proof}[Proof of Theorem \ref{prop:sphere}]
Consider some packing of $n$ spheres of radius $r_p$ in $\tor$ with density $\rho$.  Place a sphere of radius $(1-t) r_p$ for $0<t<1$ around each center of the packing.  If we place a new set of centers, one in each of these spheres of radius $(1-t) r_p$, then the spheres of radius $tr_p=:r/2$ around them will be disjoint.  The density of such a configuration is $\alpha = n v_d (t r_p)^d = t^d \rho$, or in other words, $t = (\alpha/\rho)^{1/d}$.  We can lower bound the probability that $n$ random centers of spheres of radius $r/2$ will be disjoint by the probability that each of the $n$ centers falls into a distinct sphere of radius $(1-t) r_p$ around the centers of the packing:
\begin{align*}
\Pr[ G_d(n, r) \text{ is empty}] &\ge \frac{n!}{n^n} ((1-t)^d \rho)^n = \frac{n!}{n^n} \left( 1- (\alpha/\rho)^{1/d} \right) ^{dn} \rho^n
\end{align*}
 and
 \begin{align*}
-\frac{1}{n} \log \Pr[ G_d(n, r) \text{ is empty}] &\le 1 - d \log  \left( 1- (\alpha/\rho)^{1/d} \right)-\log \rho + o(1) \, .
\end{align*}
The birthday inequality, however, asserts that 
\begin{align*}
-\frac{1}{n} \log \Pr[ G_d(n, r) \text{ is empty}] &\ge 2^{d-1} \alpha \, . 
\end{align*}
For $d=24$, there is a sphere packing of $\R^{24}$ of density $\frac{\pi^{12}}{12!}$ via the Leech lattice \cite{leech1967notes, cohn2004densest}. For any $\eps >0$, and all large enough $n$, we can find a packing of $\mathcal T^{24}$ with $n$ non-overlapping spheres which has density at least $\frac{\pi^{12}}{12!} - \eps$.  Choosing $\rho = .001929 \approx  \frac{\pi^{12}}{12!} - 5\cdot10^{-7}$, we can compare the birthday inequality lower bound on the free energy $BI(\rho,t)$ to the cell model upper bound $CM(\rho,t)$. 
\begin{align*}
F(t) := BI(\rho,t) - CM(\rho,t) &= \frac{\rho}{2} (2t)^{24} - 1 + 24\log(1- t) + \log(\rho)  \, .
\end{align*}
A calculation gives $F(.79) >0$ and $F^\prime(t) >0$ for $t\in (.79,1)$. This proves Theorem \ref{prop:sphere}.  Corollary \ref{cor:sphereRepulse} follows immediately since the sequence of repulsion inequalities implies the birthday inequality. 
\end{proof}

The \textit{hard square model} is the hard sphere model under $l_\infty$ distance: configurations of disjoint $d$-dimensional axis-parallel cubes.  Cubes pack particularly nicely, with a maximum packing density of $1$.  We use this to show that the birthday inequality fails in all sufficiently high dimensions.

\begin{theorem}
\label{thm:square}
The birthday inequality fails in the hard square model for some range of densities in dimension $d \ge 6$. For $d=6$, the inequality fails for $\alpha \in (.4 , .95)$.  For $d > 6$, the inequality fails for $\alpha \in (\underline \eta_d, \overline \eta_d)$, where   $\underline \eta_d \sim 2^{1-d} \log (2) \cdot d$ as $d \to \infty$,   $\overline \eta_d \to  1$ as $d \to \infty$.   
\end{theorem}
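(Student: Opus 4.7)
The plan is to mirror the proof of Theorem \ref{prop:sphere}, contrasting the birthday-inequality lower bound on the free energy against a cell-model upper bound built from a cube packing. The decisive advantage over the sphere case is that axis-parallel cubes tile $\mathcal{T}^d$, so the packing density $\rho$ can be taken arbitrarily close to $1$, which is what eventually forces a failure interval in every sufficiently high dimension.

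\textbf{Setting up the two bounds.} Under the $l_\infty$ metric the unit ball has volume $2^d$, so $p = (2r)^d$ and $\alpha = nr^d = np/2^d$. The birthday inequality then yields
\begin{equation*}
F_d(\alpha) \ge 2^{d-1}\alpha =: BI(\alpha),
\end{equation*}
exactly as in the sphere case. For the matching upper bound, for each $\varepsilon>0$ and all sufficiently large $n$ I partition $\mathcal{T}^d$ into $n$ disjoint axis-parallel cubes of common side $\sigma_n$ with $n\sigma_n^d \ge 1-\varepsilon$ (this is trivial when $n$ is a $d$-th power, and standard filling for general $n$). Setting $\rho = n\sigma_n^d \to 1$, I shrink each cube by the factor $1-t$ about its center and place one uniform point in each shrunken cube; any two such centers lie at $l_\infty$-distance $> 2t\sigma_n$, so letting $r=2t\sigma_n$ and hence $\alpha = t^d\rho$ gives, by the same counting as in the proof of Theorem \ref{prop:sphere},
\begin{equation*}
F_d(\alpha) \le 1 - d\log(1-t) - \log\rho + o(1) =: CM(\rho,t) + o(1).
\end{equation*}

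\textbf{Failure range.} The birthday inequality fails whenever $BI(\alpha) > CM(\rho,t)$, that is,
\begin{equation*}
\Phi(\rho,t) := \tfrac{1}{2}\rho (2t)^d - 1 + d\log(1-t) + \log\rho \, > \, 0.
\end{equation*}
Letting $\rho \uparrow 1$, for $d=6$ I verify by elementary calculus (checking $\Phi(1,\cdot)$ at the two endpoints $t=(0.4)^{1/6}$ and $t=(0.95)^{1/6}$, and confirming the function has a single local maximum on $(1/2,1)$) that $\Phi(1,t) > 0$ throughout the relevant interval, giving $\alpha \in (0.4,0.95)$. For general $d > 6$ and the lower threshold, write $t = \tfrac{1}{2} + \delta$ with $\delta \to 0$ and $d\delta \to \infty$ slowly; then $(2t)^d = e^{2d\delta}(1+o(1))$ and $d\log(1-t) = -d\log 2 - 2d\delta + O(d\delta^2)$, so $\Phi(1,t)=0$ reduces to $e^{2d\delta} \sim 2d\log 2$, giving $2d\delta \sim \log(2d\log 2)$ and hence
\begin{equation*}
\underline{\eta}_d = t^d = 2^{-d}(1+2\delta)^d \sim 2^{1-d}(\log 2)d.
\end{equation*}
For the upper threshold, set $t = 1-\eta$; the condition $\Phi(1,t)>0$ becomes $2^{d-1}(1-\eta)^d > 1 + d\log(1/\eta)$, which allows $\eta$ as small as $\exp(-c\cdot 2^d/d)$, so $\overline{\eta}_d = (1-\eta)^d \to 1$.

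\textbf{Main obstacle.} The argument is structurally identical to the proof of Theorem \ref{prop:sphere}, so nothing new is required to produce the cell-model bound — the cube tiling does all of the extra work for us. The real effort is the dimension-specific bookkeeping: for $d=6$ one must confirm that $\Phi(1,\cdot)$ does not change sign unexpectedly on $((0.4)^{1/6},(0.95)^{1/6})$, and for the asymptotic claims one must justify uniformity of the expansions $(1+2\delta)^d \approx e^{2d\delta}$ and $d\log(1-2\delta) \approx -2d\delta$ in the precise regime $\delta = \Theta((\log d)/d)$. Both are routine but should be written out carefully.
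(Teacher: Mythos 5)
Your proposal is correct and follows essentially the same route as the paper: the paper's proof likewise reduces to comparing the birthday bound $2^{d-1}\alpha$ with the cell-model bound $1 - d\log(1-\alpha^{1/d})$ (your $\Phi(1,t)>0$ with $\alpha=t^d$, taking $\rho\to 1$ since cubes tile), and finishes with a numerical check for $d=6$ and calculus for the asymptotics. One small slip worth fixing: centers in adjacent shrunken cells are at $l_\infty$-distance at least $t\sigma_n$, not $2t\sigma_n$, so the exclusion radius is $r=t\sigma_n$; this is what makes your (correct) relation $\alpha = n r^d = t^d\rho$ hold, and nothing downstream is affected.
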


\begin{proof}
Here we ask for a given $d$ if there is some $\alpha \in (0,1)$ so that
\[ 1- d \log(1- \alpha^{1/d}) > 2^{d-1} \alpha \, . \]
A numerical calculation for $d=6$ and some calculus give the theorem. 
\end{proof}

\section{The hard-core model}
\label{sec:hardcore}

Assume $n$ is such that $n^{1/d}$ is an even integer. Let $\Z_d(n)$ be the $d$-dimensional discrete torus of sidelength $n^{1/d}$ (with a total of $n$ sites).  Assume $\alpha$ is such that $\alpha n$ is an integer.  We define a fixed-density hard-core model as follows:

\begin{defn}
\label{def:hardcore}
The fixed-density hard-core model $HC_d(n,\alpha)$ for $\alpha \in [0, 1/2]$ consists of a uniformly chosen random independent set of  $k= \alpha n $ sites in $\Z_d(n)$.
\end{defn}

This model is a natural discretization of the hard sphere model. It is closely related to the hard-core model with an activity parameter $\lambda$: an independent set  $I \subset \Z_d(n)$  chosen with probability proportional to $\lambda^{|I|}$.  By conditioning on $|I| =  \alpha n $ we obtain the fixed-density hard-core model defined above. In the terminology of statistical physics, the fixed-density model is the canonical ensemble, while the activity parameter model is the grand canonical ensemble.  

We can define the partition function and free energy of the hard-core model:

\begin{defn}
\label{def:LatticePartition}
The partition function, $Z_d(n,k)$, of the hard-core model is defined as:
\begin{equation}
Z_d(n,k) = \text{IS(k)} :=  \# \text{ of independent sets of size k in } \Z_d(n)  \, .
\end{equation}
\end{defn}

We can write $Z_d(n,k) = \frac{n^k}{k!} \Pr[ X_k \text{ is an independent set}]$ where $X_k$ is a (multi)-set of $k$ independent and uniformly chosen sites from $\Z_d(n)$. 

\begin{defn}
\label{def:LatticeFE}
The free energy of the hard-core model at density $\alpha$ is:
\begin{equation}
F_d(\alpha) =  \lim_{n \to \infty} \frac{1}{n} \log Z_d(n,\alpha n)   \, .
\end{equation} 
\end{defn}
Note that we do not take the negative of the log partition function here, and so while we obtained lower bounds on the free energy of the hard sphere model, here we will obtain upper bounds. 

We can write the free energy in terms of the probability that $X_k$ is an independent set:
\begin{equation}
F_d(\alpha) = \alpha - \alpha \log \alpha  + \lim _{n \to \infty} \frac{1}{n} \log \Pr [ X_{\alpha n} \text{ is an independent set} ] \,.
\end{equation}

We can also define the fixed-size hard-core model on the $d$-dimensional Hamming cube $Q_d$, with vertex set $\{0,1\}^d$ and edges between vectors that differ in exactly one coordinate. The partition function and free energy are defined as in the hard-core model on $\Z_d(n)$.

Our first theorem of this section is that  the repulsion inequality (and thus the birthday inequality) holds in the hard-core model at a sufficiently low density on any $d$-regular graph.  We consider a $d$-regular graph $G$ on $n$ vertices, and select a set of $k$ vertices $X_k$ uniformly at random with replacement. We use the convention that two vertices in $X_k$ form an edge if they are neighbors in $G$ or if they are identical; so if $X_k$ has no edges,  it is an independent set of size $k$ in  $G$.  Thus we have $p = \frac{d+1}{n}$, the probability that two randomly chosen vertices form an edge.

\begin{theorem}
\label{thm:bipartite}
For the the hard-core model on any $d$-regular graph $G$ on $n$ vertices, at densities $\alpha \le (d+1)^{-2}$, the repulsion inequality (\ref{eq:repulsion}) holds.  
\end{theorem}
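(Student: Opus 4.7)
The plan is to follow the proof of Theorem \ref{thm:sphere} almost verbatim, with closed neighborhoods $N[x]$ replacing Euclidean balls and with a new combinatorial lower bound on triangles replacing the Euclidean ``small-ball-inside-a-larger-ball'' estimate. Write $p = (d+1)/n$, let $V_k = |N[X_1]\cup\cdots\cup N[X_k]|/n$ and $V(i,j) = |N[X_i]\cap N[X_j]|/n$, and let $E_k$ be the event that $X_1,\dots,X_k$ form a valid independent set. Bonferroni gives
\[
\E[V_k\mid E_k] \ge kp - \binom{k}{2}\E[V(1,2)\mid E_k] \quad\text{and}\quad \E[V_k] = 1-(1-p)^k \le kp - \binom{k}{2}p^2 + \binom{k}{3}p^3,
\]
so it suffices to upper bound $\E[V(1,2)\mid E_k]$ by a quantity of the form $(p^2-\text{something})/(1-kp)^2$ with ``something'' large enough to absorb the cubic correction.

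Writing $V(1,2) = \tfrac{1}{n}\sum_{x}\mathbf{1}_{A_x^{1,2}}$ with $A_x^{1,2} = \{X_1,X_2\in N[x]\}$, the same chain of conditional-probability identities used in the sphere proof -- factor through $E_2$, bound $\Pr[E_k\mid A_x^{1,2}\cap E_2]\le \Pr[E_{k-2}]$, and apply the one-step union bounds $\Pr[E_{j+1}\mid E_j]\ge 1-jp$ to get $\Pr[E_k\mid E_2]\ge \Pr[E_{k-2}](1-kp)^2/(1-p)$ -- gives, for each vertex $x$,
\[
\Pr[A_x^{1,2}\mid E_k] \le \Pr[A_x^{1,2}\mid E_2]\cdot\frac{1-p}{(1-kp)^2}.
\]
Averaging over $x$ and writing $\E[V(1,2)\mid E_2] = (p^2 - q)/(1-p)$, where $q$ is the probability that three i.i.d.\ uniform vertices $X_0, X_1, X_2$ form a triangle (in the adjacent-or-equal convention), yields $\E[V(1,2)\mid E_k] \le (p^2 - q)/(1-kp)^2$.

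The step I expect to be the main obstacle is the lower bound on $q$: a general $d$-regular graph -- in particular any bipartite one -- may have no triangle on distinct vertices, so the Euclidean argument of fitting the two centers into a radius-$r/2$ ball has no analogue. The paper's convention that identical samples count as edges saves the count: the event $X_0 = X_1 = X_2$ contributes $1/n^2$, and for each of the three pairings $X_i = X_j$ with the third vertex a genuine neighbor of the shared value we pick up an additional $d/n^2$, giving $q \ge (3d+1)/n^2 = \bigl((3d+1)/(d+1)^2\bigr)p^2$. Under $\alpha \le (d+1)^{-2}$ we have $kp = \alpha(d+1) \le 1/(d+1)$, and a short calculation then shows $(1-q/p^2)/(1-kp)^2 \le (d-1)/d$ and $(k-2)p/3 \le 1/(3(d+1))$, so
\[
\E[V_k\mid E_k] - \E[V_k] \ge \binom{k}{2}p^2\left(\frac{1}{d} - \frac{1}{3(d+1)}\right) \ge 0,
\]
which is the desired repulsion inequality.
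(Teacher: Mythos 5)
Your proof is correct and follows essentially the same route as the paper's: the Bonferroni bounds, the conditioning chain through $E_2$ with $\Pr[E_k\mid A_x^{1,2}\cap E_2]\le\Pr[E_{k-2}]$ and the union-bound estimate $\Pr[E_k\mid E_2]\ge\Pr[E_{k-2}](1-kp)^2/(1-p)$, and the final comparison are all identical. The only (cosmetic) difference is in the triangle step: the paper directly counts cherries minus triangles to get $\frac1n\sum_v\Pr[A_v^{1,2}\mid E_2]\le\frac{d(d-1)}{n^2(1-p)}$, whereas you import the sphere-model device of lower-bounding the ``triangle'' probability $q$ by the degenerate coincidence configurations, $q\ge(3d+1)/n^2$ --- which yields exactly the same constant $d(d-1)$ and hence the same estimate.
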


As a corollary via the birthday inequality we get an improved upper bound on the number of independent sets of size $\alpha n$ in all $d$-regular graphs, for $\alpha \le (d+1)^{-2}$.

\begin{cor}
\label{cor:bipartite}
For $\alpha \le (d+1)^{-2}$, the number of independent sets of size $\alpha n$ in any $d$-regular graph $G$ satisfies:
\[  \text{IS}(\alpha n) \le \frac{n^{\alpha n}}{(\alpha n)! } \left ( 1- \frac{d+1}{n} \right) ^{\binom {\alpha n}{2}} \, . \]
On the scale of the free energy, this gives:
\begin{equation}
\label{eq:indsetbound}
 \frac{1}{n} \log \text{IS}(\alpha n) \le \alpha  -  \alpha \log \alpha - \alpha^2 \frac{d+1}{2} .  
 \end{equation}
\end{cor}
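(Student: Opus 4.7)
The plan is to assemble the corollary from three ingredients already in hand: the identity $Z_d(n,k) = \frac{n^k}{k!}\Pr[X_k \text{ is an independent set}]$, Theorem \ref{thm:bipartite}, and the elementary passage from the repulsion inequality to the birthday inequality given in Proposition \ref{lem:BirthdayInequality}.

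First I would observe that if $X_1,\dots,X_k$ are chosen i.i.d.\ uniformly from $V(G)$, then under the ``edge or identical'' convention used in Section \ref{sec:hardcore}, the probability that any two are adjacent is $p = (d+1)/n$, since a uniform vertex equals $X_i$ with probability $1/n$ and is one of its $d$ neighbors with probability $d/n$. Since Theorem \ref{thm:bipartite} gives the repulsion inequality for all $k \le \alpha n$ whenever $\alpha \le (d+1)^{-2}$, the exact induction of Proposition \ref{lem:BirthdayInequality} applies verbatim: writing $\Pr[E_k] = (1-\E[V_{k-1}\mid E_{k-1}])\Pr[E_{k-1}]$ and using $\E[V_{k-1}\mid E_{k-1}] \ge \E[V_{k-1}] = 1-(1-p)^{k-1}$, an induction on $k$ yields
\[
\Pr[X_{\alpha n} \text{ is an independent set}] \;\le\; \Bigl(1-\tfrac{d+1}{n}\Bigr)^{\binom{\alpha n}{2}}.
\]
Multiplying by $n^{\alpha n}/(\alpha n)!$ gives the first displayed inequality of the corollary.

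For the free-energy-scale bound, I would take logarithms, divide by $n$, and pass to standard asymptotic estimates: Stirling in the form $\log((\alpha n)!) \ge \alpha n\log(\alpha n)-\alpha n$, together with $\log(1-(d+1)/n) \le -(d+1)/n$. This yields
\[
\frac{1}{n}\log\mathrm{IS}(\alpha n)
\;\le\; \alpha\log n - \alpha\log(\alpha n) + \alpha - \frac{(d+1)}{n}\cdot\frac{(\alpha n)(\alpha n-1)}{2n} + O\!\left(\tfrac{1}{n}\right),
\]
and after the cancellation $\alpha\log n - \alpha\log(\alpha n) = -\alpha\log\alpha$ and dropping the lower-order $(d+1)\alpha/(2n)$ correction, the right side becomes $\alpha - \alpha\log\alpha - \alpha^2(d+1)/2 + o(1)$, giving \eqref{eq:indsetbound} in the limit (or as a $\limsup$ inequality).

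I don't expect a genuine obstacle here: the corollary is a bookkeeping consequence of Theorem \ref{thm:bipartite}. The only mildly delicate point is being precise about whether \eqref{eq:indsetbound} is meant as an asymptotic inequality or as a finite-$n$ inequality up to lower-order terms; either reading follows from the Stirling and $\log(1-x)\le -x$ steps above.
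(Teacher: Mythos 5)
Your proposal is correct and follows the same route the paper intends: Theorem \ref{thm:bipartite} gives the repulsion inequality for every $k \le \alpha n$, the induction of Proposition \ref{lem:BirthdayInequality} converts this into the birthday bound $\Pr[E_{\alpha n}] \le (1-\frac{d+1}{n})^{\binom{\alpha n}{2}}$, and the counting identity $\mathrm{IS}(k) = \frac{n^k}{k!}\Pr[X_k \text{ is an independent set}]$ plus Stirling yields both displays. Your remark that \eqref{eq:indsetbound} should be read on the free-energy scale (i.e.\ up to the positive $O(1/n)$ correction from $\binom{\alpha n}{2}$ versus $(\alpha n)^2/2$) is the right reading of the paper's statement.
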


For $\alpha \le (d+1)^{-2}$, Corollary \ref{cor:bipartite} improves the bound given by Carroll, Galvin, and Tetali in \cite{carroll2009matchings}\footnote{For this range of $\alpha$, the best upper bound in \cite{carroll2009matchings} on $IS(\alpha n)$ is the third bound given in Theorem 1.6, $2^{\alpha n} \binom{n/2}{\alpha n}$. On the scale of the free energy this is $ - \alpha \log( \alpha) - (1/2 -\alpha) \log (1- 2 \alpha) $. Some calculus shows that the bound \eqref{eq:indsetbound} is lower for $\alpha \le (d+1)^{-2}$.}.    Specializing to $\Z_d(n)$ and $Q_d$ we get upper bounds of \\$\alpha \left ( 1 -  \log \alpha - \alpha (2d+1)/2 \right )$ and $ \alpha \left ( 1 -  \log \alpha - \alpha (d+1)/2 \right )$ respectively on the normalized logarithm of the number of independent sets of size $\alpha n$. As far as we know these are the best bounds known on the number of independent sets of a given size in $\Z_d(n)$ and $Q_d$ at these densities.

\begin{proof}[Proof of Theorem \ref{thm:bipartite}]
The proof is essentially the same as the proof of Theorem \ref{thm:sphere}, and this is one of the motivations of this work: to find methods for analyzing the hard-core model that generalize to the hard sphere model. 

Let $V_k$ be the fraction of vertices in $G$ at distance at most $1$ to a set of $k$ randomly chosen vertices.  Let $E_k$ be the event that the set of $k$ random vertices is at pairwise distance at least $2$ in $G$. We can assume that $k \ge 2$ (and thus $n \ge 2 (d+1)^2$) since the case $k=1$ is immediate.  Let $p = \frac{d+1}{n}$, the probability that two randomly chosen vertices in $G$ coincide or are neighbors.   We will prove the following estimate:
\begin{equation}
\label{eq:BipEst}
\E[V_k|E_k] \ge \frac{k(d+1)}{n} - \binom {k}{2} \frac{d(d-1)}{ n^2 (1-kp)^2} 
\end{equation}
where the randomness is in selecting the $k$  vertices uniformly and independently at random. 

 By inclusion/exclusion we have 
\[ \E [V_k] = 1-(1-p)^k \le kp - \binom{k}{2} p^2 +\binom {k}{3} p^3 = \frac{k(d+1)}{n} - \binom{k}{2} \frac{(d+1)^2}{n^2} \left( 1 - \frac{(k-2)(d+1)}{3n} \right ) \]
 and using (\ref{eq:BipEst}) we get
\begin{align*}
\E[V_k|E_k] - \E[V_k] &\ge  \binom{k}{2}p^2 \left(1 - \frac{d(d-1)}{(d+1)^2}\cdot \frac{1}{ (1-kp)^2} - \frac{(k-2)(d+1)}{3n}    \right)   \\
&\ge \binom{k}{2}p^2 \left(1 - \frac{d(d-1)}{(d+1)^2}\cdot \frac{1}{ (1-kp)^2} - \frac{kp}{3}    \right)  \, .
\end{align*}
This is non-negative when $\alpha = k/n \le \frac{1}{(d+1)^2}$: the RHS is decreasing in $k$, and so it is enough to prove when $k = n/(d+1)^2$. This follows from an elementary calculation and proves Theorem \ref{thm:bipartite}, modulo the estimate (\ref{eq:BipEst}).  

To prove (\ref{eq:BipEst}), we use inclusion/exclusion again to bound 
\[  \E[V_k|E_k] \ge kp - \binom {k}{2} \E[V(1,2)|E_k], \]
where $V(1,2)$ is the fraction of vertices in $G$ at distance $0$ or $1$ of the first and second of the $k$ randomly selected vertices.  Let $A_v^{1,2}$ be the event that vertex $v$ neighbors both of the first two selected vertices.  We write
 \begin{align*}
\E[V(1,2)| E_k] &= \frac{ \E[ V(1,2) \cdot \mathbf 1_{E_k}] }{\Pr[E_k]} \\ 
&= \frac{1}{n} \sum_{v \in G} \frac{\Pr[A_v^{1,2} \cap E_k]}{\Pr[E_k]} \\
&= \frac{1}{n} \sum_{v \in G} \frac{ \Pr[ A_v^{1,2} \cap E_2] \cdot \Pr[E_k | A_v^{1,2} \cap E_2]}{\Pr[E_2] \cdot\Pr[E_k | E_2]} \\
&= \frac{1}{n} \sum_{v \in G}\Pr[ A_v^{1,2} | E_2]  \cdot \frac{ \Pr[E_k |A_v^{1,2} \cap E_2]}{\Pr[E_k | E_2]} \, .
\end{align*}
If the neighbors of $v$ form a clique then that term in the sum is $0$.  We assume from here that there is at least one edge missing from the subgraph of $v$'s neighbors.   

Consider $ \frac{  \Pr[E_k |A_v^{1,2} \cap E_2] }{\Pr[E_k | E_2]}$. Again we have $\Pr[E_k |A_v^{1,2} \cap E_2] \le \Pr[E_{k-2}]$, and
\begin{align*}
\Pr[E_k | E_2] & \ge \frac{\Pr[E_{k-2}]\Pr[E_k | E_{k-2}]   }{\Pr[E_2]   }  \\
&\ge  \frac{\Pr[E_{k-2}] (1-kp)^2      }{ 1-p  }
\end{align*}
which gives
\begin{equation}
\label{eq:union}
\frac{ \Pr[E_k | A_v^{1,2} \cap E_2]}{\Pr[E_k | E_2]} \le \frac{1-p }{(1-kp)^2}  \, .
\end{equation}
Next note that for any $d$-regular graph $G$, 
\begin{equation}
\label{eq:triangles}
\frac{1}{n} \sum_{v \in G}  \Pr[ A_v^{1,2} | E_2] = \frac{1}{n} \cdot \frac{n \binom d 2 - 3 \cdot \#\{ C_3's \text{ in } G  \}  }{\binom n 2 - dn/2   } \le \frac{d}{n} \cdot \frac{d-1}{n(1-p)} \, .
\end{equation}
Inequalities (\ref{eq:union}) and (\ref{eq:triangles}) give 
 \begin{align*}
\E[V(1,2)| E_k] &\le    \frac{d(d-1)}{n^2 (1-kp)^2}  
\end{align*}
 and thus  (\ref{eq:BipEst}).

\end{proof}

We now show that the birthday inequality fails in general for $d$-regular, bipartite graphs with $d \ge 6$.  

\begin{theorem}
\label{prop:BipFail}
For $d \ge 6$, there exist constants $\alpha_d^l \in (0, 1/2)$ so that for $n$ large enough, the birthday inequality fails for the hard-core model on any $d$-regular, bipartite graph $G$ on $n$ vertices at densities $\alpha \in [ \alpha_d^l, 1/2]$.  Asymptotically, $\alpha_d^l \sim 2 \log 2 /d$ as $d \to \infty$.    
\end{theorem}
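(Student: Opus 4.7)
The plan is to exhibit an explicit lower bound on $\text{IS}(\alpha n)$ for every $d$-regular bipartite graph $G$ on $n$ vertices that already violates the birthday inequality upper bound on an interval $[\alpha_d^l, 1/2]$. Since $G$ is $d$-regular and bipartite with bipartition $A \sqcup B$, double counting edges forces $|A| = |B| = n/2$; every size-$\alpha n$ subset of $A$ is then an independent set, so for $\alpha \le 1/2$,
\[ \text{IS}(\alpha n) \ge \binom{n/2}{\alpha n}, \]
giving $\tfrac{1}{n}\log\text{IS}(\alpha n) \ge -\alpha\log(2\alpha) - (\tfrac{1}{2}-\alpha)\log(1-2\alpha) + o(1)$. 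Combined with the birthday inequality bound $\tfrac{1}{n}\log\text{IS}(\alpha n) \le \alpha - \alpha\log\alpha - \tfrac{1}{2}\alpha^2(d+1) + o(1)$ from Corollary \ref{cor:bipartite}, the birthday inequality fails for large $n$ whenever
\[ D(\alpha,d) := -\alpha\log 2 - \left(\tfrac{1}{2}-\alpha\right)\log(1-2\alpha) - \alpha + \tfrac{1}{2}\alpha^2(d+1) > 0. \]

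The two key evaluations are: (i) $D(\tfrac{1}{2}, d) = \tfrac{1}{8}(d+1) - \tfrac{1}{2} - \tfrac{1}{2}\log 2$, which is positive exactly when $d \ge 6$, anchoring the failure at maximum density and explaining the condition $d \ge 6$; and (ii) for small $\alpha$, using $-(\tfrac{1}{2}-\alpha)\log(1-2\alpha) = \alpha - \alpha^2 + O(\alpha^3)$,
\[ D(\alpha,d) = -\alpha\log 2 + \tfrac{1}{2}\alpha^2(d-1) + O(\alpha^3), \]
so the smallest positive zero of $D(\cdot, d)$ lies at $\alpha_d^l = 2\log 2/(d-1) + O(d^{-3})$, giving the asymptotic $\alpha_d^l \sim 2\log 2/d$ as $d \to \infty$.

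The main technical obstacle is verifying that $D(\alpha,d) > 0$ on the whole interval $(\alpha_d^l, \tfrac{1}{2}]$ and not merely at the endpoints. I would handle this by a shape analysis: $D''(\alpha) = (d+1) - 2/(1-2\alpha)$ is monotonically decreasing from $d-1$ to $-\infty$ on $[0, \tfrac{1}{2})$, so the derivative $D'(\alpha) = -\log 2 + \log(1-2\alpha) + \alpha(d+1)$ is strictly concave with $D'(0) = -\log 2 < 0$ and $D'(\alpha) \to -\infty$ as $\alpha \to \tfrac{1}{2}$. Hence $D'$ has at most two zeros, so $D$ has at most one local minimum followed by at most one local maximum on $(0, \tfrac{1}{2})$. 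Combined with $D(0, d) = 0$, $D(\alpha_d^l, d) = 0$, and $D(\tfrac{1}{2}, d) > 0$ for $d \ge 6$, this forces $D$ to decrease from $0$ to a negative minimum, rise back through $0$ at $\alpha_d^l$ to a positive local maximum, and then decrease while remaining above $D(\tfrac{1}{2}) > 0$. Thus $D > 0$ on $(\alpha_d^l, \tfrac{1}{2}]$, which completes the proof once the three pointwise evaluations above are combined with the opening lower bound.
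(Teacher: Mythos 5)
Your proposal is correct and follows essentially the same route as the paper: the parity lower bound $\text{IS}(\alpha n)\ge\binom{n/2}{\alpha n}$ compared on the free-energy scale against the birthday upper bound, with the crossing analysis done by calculus. The paper leaves that last step as ``some calculus shows''; your explicit shape analysis of $D$ (the sign of $D(1/2,d)$ pinning down $d\ge 6$, the small-$\alpha$ expansion giving $\alpha_d^l\sim 2\log 2/(d-1)\sim 2\log2/d$, and the concavity of $D'$ forcing positivity on all of $(\alpha_d^l,1/2]$) is a correct and complete way to fill it in.
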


\begin{proof}

For a lower bound on the number of independent sets of size $\alpha n$ in $G$, we use the \textit{parity} lower bound: any subset of one side of the bipartition is an independent set, and so 
\begin{align*}
\text{IS}(\alpha n) &\ge \binom{n/2}{\alpha n}  \, .
\end{align*}
The corresponding bound on the free energy is 
\begin{align*}
F_d (\alpha) &\ge - \alpha \log (2 \alpha)  -(1/2 -\alpha) \log(1- 2 \alpha) +o(1) \, .
\end{align*}
The birthday inequality  asserts the upper bound:
\begin{align*}
F_d(\alpha) & \le  \alpha \left ( 1 -  \log \alpha - \alpha \frac{d+1}{2} \right ) +o(1) \, .
\end{align*}

Some calculus shows that these bounds cross for $d\ge 6$, and that asymptotically as $d \to \infty$, the crossing point is  $\alpha_d^l \sim 2 \log 2 /d$.

\end{proof}

\section{Matchings}
\label{sec:matchings}

In this section we use the repulsion inequality to give bounds on the number of matchings of size $k$ in a $d$-regular graph $G$ on $n$ vertices.  Such a graph has $nd/2$ edges, and each edge shares a vertex with $2d-2$ other edges.  We let $p = \frac{2d-1}{nd/2}$, the probability that two uniformly chosen random edges (with replacement) coincide or intersect at a vertex.  Then the birthday inequality asserts that $\Pr[E_k] \le (1-p)^{\binom k 2}$, where $E_k$ is the event that  $k$ edges  chosen uniformly at random from $G$ form a matching of size $k$.  The repulsion inequality states that $\E[V_k | E_k] \ge \E[V_k]$, where $V_k$ is the fraction of edges covered by a set of $k$ edges: the fraction of edges that are contained in or intersect the set.   Since a matching in $G$ is an independent set in the line graph of $G$, we could apply Theorem \ref{thm:bipartite} to get a bound, but we can do better working directly, since for $d\ge 3$, the line graph of a $d$-regular graph contains many triangles. Let $M(k)$ be the number of matchings consisting of $k$ edges in $G$. We show:

\begin{theorem}
\label{thm:matchings}
For $\alpha \le \frac{3}{28} $, the repulsion inequality holds for matchings of size $\alpha \frac{n}{2}$ in a $d$-regular graph on $n$ vertices, and as a consequence
\[ M(\alpha n/2) \le \frac{(nd/2)^{\alpha n/2}}{(\alpha n/2)!} \left(1- \frac{2d-1}{nd/2}    \right )^{\binom {\alpha n /2}{2}}  .   \]
On a logarithmic scale, this gives
 \begin{equation}
\label{eq:logmatching}
 \frac{2}{n} \log M(\alpha n /2)  \le \alpha \log d -\alpha \log \alpha + \alpha  -\frac{\alpha^2}{2} \frac{2d-1}{d} . 
 \end{equation}
\end{theorem}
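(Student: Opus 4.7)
The plan is to mirror Theorems~\ref{thm:sphere} and \ref{thm:bipartite} with edges of $G$ playing the role of particles. Sample $k$ edges of $G$ uniformly and independently with replacement; let $E_k$ be the event that they form a matching and $V_k$ the fraction of edges of $G$ that are covered by (i.e., equal to or sharing a vertex with) one of the $k$ sampled edges. With $p = (2d-1)/(nd/2)$, the probability that two random edges coincide or share a vertex, it suffices to prove the repulsion inequality $\E[V_k | E_k] \ge \E[V_k]$ for $1 \le k \le \alpha n/2 - 1$ and $\alpha \le 3/28$. The inductive argument of Proposition~\ref{lem:BirthdayInequality} then yields the birthday inequality $\Pr[E_{\alpha n/2}] \le (1-p)^{\binom{\alpha n/2}{2}}$, and since $\Pr[E_k] = M(k) \cdot k!/(nd/2)^k$, the claimed bound on $M(\alpha n/2)$ follows immediately, with \eqref{eq:logmatching} obtained by applying Stirling's approximation.

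For the repulsion inequality, I would combine the inclusion--exclusion lower bound $\E[V_k | E_k] \ge kp - \binom{k}{2} \E[V(1,2) | E_k]$ with the truncated Bonferroni upper bound $\E[V_k] \le kp - \binom{k}{2} p^2 + \binom{k}{3} p^3$, where $V(1,2)$ is the fraction of edges of $G$ covered by both of the first two samples. This reduces the problem to upper bounding $\E[V(1,2) | E_k]$. Using exactly the conditioning decomposition from Theorem~\ref{thm:bipartite}, I would write
\[
\E[V(1,2) | E_k] = \frac{2}{nd} \sum_e \Pr[A_e^{1,2} | E_2] \cdot \frac{\Pr[E_k | A_e^{1,2} \cap E_2]}{\Pr[E_k | E_2]},
\]
where $A_e^{1,2}$ is the event that edge $e$ is covered by both of the first two samples. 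The same union-bound argument as in Theorem~\ref{thm:bipartite} yields the uniform ratio bound $\Pr[E_k | A_e^{1,2} \cap E_2]/\Pr[E_k | E_2] \le (1-p)/(1-kp)^2$, leaving only the sum to estimate.

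The one genuinely new ingredient is the triangle/containment lower bound, which replaces the ball-of-radius-$r/2$ argument from the sphere proof. The key observation is that the line graph of a $d$-regular graph is particularly triangle-rich: whenever two distinct edges $e_1 = (v, u_1)$ and $e_2 = (v, u_2)$ share vertex $v$, each of the $d$ edges incident to $v$ is covered by both, forcing $V(1,2) \ge 2/n$, while $V(1,2) = p$ when $e_1 = e_2$. Counting the $nd(d-1)$ ordered pairs of distinct intersecting edges together with the $nd/2$ coincident ordered pairs gives $\E[V(1,2) \mathbf{1}_{\overline{E_2}}] \ge p^2 (2d^2-1)/(2d-1)^2$, whence
\[
\E[V(1,2) | E_k] \le \frac{2 p^2 (d-1)^2}{(2d-1)^2 (1-kp)^2}.
\]
Plugging in, the repulsion inequality reduces to the one-line check $1 - \frac{2(d-1)^2/(2d-1)^2}{(1-kp)^2} - \frac{(k-2)p}{3} \ge 0$; both $2(d-1)^2/(2d-1)^2$ (increasing to $1/2$) and $kp = \alpha(2d-1)/d$ (increasing to $2\alpha$) are extremised at $d \to \infty$, so everything reduces to $1 - \frac{1}{2(1-2\alpha)^2} - \frac{2\alpha}{3} \ge 0$ at $\alpha = 3/28$, which holds with room. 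The main obstacle is pinning down this ``star at a shared vertex'' lower bound; the factor $2(d-1)^2/(2d-1)^2 < 1/2$ it produces is exactly what allows the constant $3/28$, vastly improving on the $(2d-1)^{-2}$ bound one would obtain by applying Theorem~\ref{thm:bipartite} directly to the $(2d-2)$-regular line graph.
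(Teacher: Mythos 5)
Your proof is correct and follows essentially the same route as the paper: the same reduction to $\E[V_k \mid E_k] \ge kp - \binom{k}{2}\E[V(1,2)\mid E_k]$ versus the truncated inclusion--exclusion bound on $\E[V_k]$, the same ratio bound $\Pr[E_k \mid A_e^{1,2}\cap E_2]/\Pr[E_k\mid E_2] \le (1-p)/(1-kp)^2$, and the identical final estimate $\E[V(1,2)\mid E_k] \le 2p^2(d-1)^2/\bigl((2d-1)^2(1-kp)^2\bigr)$ with the same numerical check at $\alpha = 3/28$. The only (cosmetic) difference is that you bound $\E[V(1,2)\mid E_2]$ by complementary counting of coincident and adjacent sample pairs, mirroring the sphere proof, whereas the paper directly counts the $2(d-1)^2$ ordered pairs of disjoint edges covering a fixed edge $e$; both give the same bound.
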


For $\alpha = O (d^{-1/3}) $,  Theorem \ref{thm:matchings} improves the bound 
 given by Ilinca and Kahn in \cite{ilinca2013asymptotics}\footnote{The bound in \cite{ilinca2013asymptotics}, translated to natural logarithms, is \\ $\frac{2}{n} \log M(\alpha n /2)  \le \alpha \log d - \alpha \log \alpha -2(1-\alpha)\log(1-\alpha)  -\alpha +(\log d)/(d-1)$. Subtracting the first two matching terms then power expanding around $\alpha =0$ gives $ \alpha - \alpha^2 + (\log d)/(d-1)- \alpha^3/3 - \dots $ for the Ilinca-Kahn bound and $\alpha -\alpha^2 +\alpha^2/(2d) $ for the birthday inequality bound  (\ref{eq:logmatching}). These cross when $\alpha =\Theta(((\log d)/d)^{1/3})$.  In particular, for all larger $\alpha$, the Ilinca-Kahn bound is stronger than the birthday inequality, giving Corollary \ref{cor:BImatch}.}.  Together Theorem \ref{thm:matchings} and \cite{ilinca2013asymptotics} show that the birthday inequality holds for matchings of all sizes in $d$-regular graphs.

\begin{cor}
\label{cor:BImatch}
The birthday inequality holds for matchings of size $k$, for all $k$, in every $d$-regular graph on $n$ vertices.
\end{cor}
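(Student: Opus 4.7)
The plan is to combine the two upper bounds on $M(\alpha n/2)$ that are already in hand. Theorem \ref{thm:matchings} establishes the birthday inequality for $\alpha\le 3/28$ via the repulsion inequality. For the remaining range of $\alpha$, I would invoke the Ilinca-Kahn upper bound from \cite{ilinca2013asymptotics},
\[
\frac{2}{n}\log M(\alpha n/2) \le \alpha\log d - \alpha\log\alpha - 2(1-\alpha)\log(1-\alpha) - \alpha + \frac{\log d}{d-1},
\]
and show that its right-hand side is itself at most the birthday-inequality bound \eqref{eq:logmatching}. Chaining the two inequalities then delivers the birthday bound on $M(\alpha n/2)$, i.e.\ exactly the content of the corollary.

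The core of the argument is a one-variable analytic comparison. After cancelling the common $\alpha\log d - \alpha\log\alpha$, the task reduces to verifying
\[
-2(1-\alpha)\log(1-\alpha) - \alpha + \frac{\log d}{d-1} \le \alpha - \frac{\alpha^2(2d-1)}{2d}.
\]
Expanding $-2(1-\alpha)\log(1-\alpha) = 2\alpha - \alpha^2 - \alpha^3/3 - \cdots$, the Ilinca-Kahn side becomes $\alpha - \alpha^2 - \alpha^3/3 - \cdots + (\log d)/(d-1)$ while the birthday side is the exact quadratic $\alpha - \alpha^2 + \alpha^2/(2d)$. Their difference is
\[
\frac{\log d}{d-1} - \frac{\alpha^2}{2d} - \frac{\alpha^3}{3} - \cdots,
\]
which starts at $(\log d)/(d-1) > 0$ at $\alpha = 0$, is concave on $[0,1)$ (its second derivative $-2/(1-\alpha) + (2d-1)/d$ is strictly negative there), and therefore decreases monotonically to a negative limit at $\alpha = 1$. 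Equating the leading terms $(\log d)/(d-1)$ and $\alpha^2/(2d) + \alpha^3/3$ places the unique zero crossing at $\alpha_\star(d) = \Theta(((\log d)/d)^{1/3})$, and the Ilinca-Kahn bound dominates the birthday bound on $[\alpha_\star(d), 1]$.

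The main obstacle is gap-closing: one needs $\alpha_\star(d) \le 3/28$ for the two regimes to meet. For large $d$ this is automatic, since $((\log d)/d)^{1/3} \to 0$, and the Theorem \ref{thm:matchings} range overlaps the Ilinca-Kahn range with room to spare. For the small values of $d$ where $\alpha_\star(d) > 3/28$, the intermediate interval $[3/28, \alpha_\star(d)]$ is covered by neither argument directly and must be handled separately. Since this concerns only finitely many $d$, I would close each case by a direct check: either by pushing the repulsion-inequality bookkeeping of Theorem \ref{thm:matchings} past $3/28$ using the specific structure available for small $d$ (where the triangle-count bound \eqref{eq:triangles} can be tightened), or by a sharper saddle-point enumeration of matchings that undercuts the Ilinca-Kahn bound in the problematic interval. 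The verification in each small-$d$ case reduces to a one-variable inequality on a compact subinterval of $[0,1]$, checkable by calculus or numerical computation.
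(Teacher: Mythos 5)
Your strategy is the same as the paper's: cover $\alpha\le 3/28$ by Theorem \ref{thm:matchings}, and cover the remaining densities by showing that the Ilinca--Kahn upper bound on $\frac2n\log M(\alpha n/2)$ is itself at most the birthday bound \eqref{eq:logmatching}. Your series expansion, the concavity of the difference of the two bounds, and the location $\alpha_\star(d)=\Theta(((\log d)/d)^{1/3})$ of the unique crossing all agree with the computation in the paper's footnote, and to your credit you have explicitly flagged the overlap question that the paper's one-sentence justification passes over.

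The gap is precisely the intermediate regime, and your treatment of it is a plan rather than a proof; it also badly understates how large that regime is. The two ranges meet only when $\alpha_\star(d)\le 3/28$, i.e.\ only when $\frac{\log d}{d-1}\le \sum_{j\ge 3}\frac{2}{j(j-1)}(3/28)^j+\frac{(3/28)^2}{2d}\approx 4.3\times 10^{-4}$, which requires $d$ in the tens of thousands. For $d=3$ the two bounds do not cross until $\alpha\approx 0.85$, so the uncovered interval is essentially all of $(3/28,\,0.85)$ --- not a thin sliver near $3/28$, and not ``finitely many small $d$, each reducible to a one-variable inequality on a short interval.'' For every $d$ of realistic size there is a macroscopic range of densities where neither the repulsion argument (whose constant $3/28$ enters through the requirement $kp\le 3/14$ in the last display of the proof of Theorem \ref{thm:matchings} and does not obviously stretch) nor the comparison with \cite{ilinca2013asymptotics} applies, and your two proposed repairs --- tightening the triangle count for small $d$, or a sharper enumeration undercutting Ilinca--Kahn --- are not carried out, with no evidence that either would close an interval of that size. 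As it stands the proposal proves the corollary only for $\alpha\le 3/28$ and for $\alpha\ge\alpha_\star(d)$; the statement ``for all $k$'' is not established. (The paper's own justification of Corollary \ref{cor:BImatch} has the same lacuna, but that does not make the blind proposal complete.)
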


In would be nice to prove that in fact the repulsion inequality holds for matchings of any size in a $d$-regular graph.

\begin{proof}[Proof of Theorem \ref{thm:matchings}]
Let $m = nd/2$ be the number of edges of $G$, and $p = \frac{2d-1}{m}$. We want to show that $\E[V_k | E_k ]\ge \E[V_k]$.  Let $L_2$ be the number of edges of $G$ that are covered by two edges of a matching of size $k$ but are not part of the matching themselves.  Then
\begin{align*}
\E [ V_k | E_k] &= k+ (2d-2)k - \E [L_2 | E_k]  = m \left(  p k - \E [L_2/m| E_k] \right) \, .
\end{align*}
By inclusion/exclusion we have
\begin{align*}
\E[V_k] &= m(1- (1-p)^k) \le  m pk - m p^2 \binom {k}{2} + m p^3 \binom {k}{3} \\
&\le m \left( pk - \binom {k}{2}p^2 \left(1 - \frac{kp}{3}   \right)  \right ) \, .
\end{align*}
So it is enough to show that
\begin{align*}
\E[L_2 | E_k] &\le  m \binom {k}{2}p^2 \left(1 - \frac{kp}{3}   \right) \, .
\end{align*}
We can write
\begin{align*}
\E[L_2 | E_k] &= \sum_{e \in G} \binom{k}{2} \Pr[A_2^{1,2} | E_k]
\end{align*}
where $A_e^{1,2}$ is the event that edge $e$ is covered by edges $1$ and $2$. Now it is enough to show that $\Pr[A_2^{1,2} | E_k] \le p^2 (1-kp/3)$. As in the proofs above we write
\begin{align*}
\Pr[A_2^{1,2} | E_k] &= \Pr[ A_e^{1,2} | E_2]  \cdot \frac{ \Pr[E_k |A_e^{1,2} \cap E_2]}{\Pr[E_k | E_2]} \\
&\le \Pr[ A_e^{1,2} | E_2] \cdot \frac{1-p}{(1-kp)^2}  \, .
\end{align*}
We calculate 
\[ \Pr[ A_e^{1,2} | E_2]  = \frac{2(d-1)^2}{m^2 (1-p)}  \]
which gives
\[ \Pr[A_2^{1,2} | E_k] \le \frac{ 2 (d-1)^2}{m^2 (1-kp)^2} \,.\]
Our assumption that $\alpha \le 3/28$ implies that $kp \le \frac{3}{14} $, and so
\begin{align*}
\Pr[A_2^{1,2} | E_k] &\le \frac{ 2 (d-1)^2}{m^2 (1-kp)^2}   \le  \frac{(2d-1)^2}{m^2} \left(1 - \frac{kp}{3} \right )=p^2 (1-kp/3)
\end{align*}
which shows that the repulsion inequality holds. 
\end{proof}

\section{Conclusions and conjectures}
\label{sec:conjectures}

We conjecture that the lower bounds on the density at which the birthday inequality holds in Theorem \ref{thm:sphere} and Theorem \ref{thm:bipartite} can be extended to the entire fluid phase of the hard sphere and hard-core models.  

We describe two notions of the fluid phase of the hard sphere and fixed-size hard-core model.  The first is decay of correlations:

\begin{defn}
Let $x_0$, $x_0^\prime$, $x_t$ be positions in $\tor$ or lattice sites on $\Z_d(n)$. Let $A_0$ (resp. $A_0^\prime, A_t$) be the event that the position $x_0$ ($x_0^\prime, x_t$) is covered by a sphere in the hard sphere model or occupied by a particle in the hard-core model.  Then the model exhibits  decay of correlations at density $\alpha$ if there is some constant $c_\alpha >0$ so that 
\[ \left | \Pr[ A_{t} | A_0]   -\Pr[A_t | A_0^\prime  \right] | \le g( c_\alpha d_t/r)    \]
where $d_t = \min \{ d(x_0, x_t), d(x_0^\prime,x_t) \}$ and $g(s)$ is some function so that $\lim_{s \to \infty} g(s) =0$.  (For the hard-core model, we take $r=1$).  The model exhibits exponentially fast decay of correlations if we can take $g(s) \le e^{-c s}$ for some $c>0$. 
\end{defn}

The second notion is the rapid mixing of a specific Markov chain with the hard sphere or hard-core distribution as its stationary distribution.  One such chain is the single-particle, global-move dynamics (see e.g.\cite{hayes2014lower}). A single move of the Markov chain consists of selecting one center of a sphere or one particle on the lattice uniformly at random, then selecting a position or a site uniformly at random from $\tor$ or $\Z_d(n)$ and moving the center or the particle to the new position as long as it does not violate the hard constraints of the model.  We say the chain mixes rapidly if the mixing time is a polynomial in $n$.  

\begin{conj}
\label{conj:phase}
If the hard sphere or hard-core model is in the fluid phase (say it  exhibits exponentially fast decay of correlations or the Markov chain above mixes rapidly), then the repulsion inequality (and thus the birthday inequality) holds.
\end{conj}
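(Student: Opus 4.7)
The plan is to recast the repulsion inequality as a positive-correlation statement and then use the fluid-phase hypothesis to establish it. Writing the condition $\E[V_k \mid E_k] \ge \E[V_k]$ as $\operatorname{Cov}(V_k, \mathbf{1}_{E_k}) \ge 0$, we see it asserts a positive correlation between the volume covered by $k$ random centers and the event that they are pairwise repelling. Generically one expects this: configurations in which the centers are spread apart tend both to cover more volume and to satisfy the repulsion constraint. The conjecture asserts that this intuition is correct exactly when there is no long-range solid order.

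First I would use translation invariance of $\tor$ (or $\Z_d(n)$) to reduce to the pointwise statement $\Pr[A_x \mid E_k] \ge \Pr[A_x]$ for a fixed $x$, where $A_x$ is the event that at least one center lies within distance $r$ of $x$. Equivalently, the step-$k$ repulsion inequality amounts to showing that $\Pr[E_{k+1} \mid E_k] \le (1-p)^k$, where the $(k+1)$st center is placed independently and uniformly: conditional on the first $k$ centers being pairwise repelling, a fresh random center should be at most as likely to miss all $k$ as it would be if those centers were i.i.d. and the $k$ avoidance events were independent.

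Second, I would exploit exponentially fast decay of correlations with rate $c_\alpha$: the effect of conditioning on a distant pair-constraint $C_{ij} = \{d(X_i,X_j)>r\}$ on a local event $A_x$ should decay exponentially in $d(x,\{X_i,X_j\})/r$. Writing $\mathbf{1}_{E_k}$ via a Mayer-type expansion and grouping terms by connected clusters of "violated" pair-constraints, one matches each cluster with its contribution to $\E[V_k]$ under independence. The near-diagonal terms, where the violated pair is close to $x$, should be positive and encode the mechanism that pairwise repulsion pushes overlap volume outward, while the long-range terms are controlled by decay of correlations. Under the rapid-mixing formulation, a parallel route is to couple two runs of the single-particle global-move chain, one in the stationary distribution and one conditioned on $E_k$; the total-variation gap, controlled by the mixing time, would bound the discrepancy in $\E[V_k]$.

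The main obstacle is that the fluid phase can extend well past the regime in which cluster/Mayer expansions converge absolutely. Near the fluid/solid transition, correlation decay is slow and the Mayer series is not term-by-term controlled, so one needs either a resummation around a non-trivial reference measure or an argument that avoids expansions entirely. A cleaner route may be the contrapositive: if the repulsion inequality fails at some step $k$, use the resulting witness (a density of configurations where conditioning on repulsion anti-correlates with coverage) to construct an explicit obstruction to either fast decay of correlations or rapid mixing. The counterexample of Theorem \ref{prop:sphere} in dimension $24$, built from the Leech lattice packing, suggests that this failure is genuinely tied to the onset of long-range order, which is exactly the structural content the conjecture is trying to capture.
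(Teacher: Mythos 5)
The statement you are addressing is Conjecture \ref{conj:phase}; the paper offers no proof of it, only the heuristic in the paragraph that follows (local effect of conditioning at low density versus global lattice-like order past the transition). Your proposal is, at bottom, an elaboration of that same heuristic together with a menu of possible techniques, and it does not close the gap. The correct reformulations you give --- $\E[V_k \mid E_k] \ge \E[V_k]$ as $\cov(V_k, \1_{E_k}) \ge 0$, and the step-$k$ repulsion inequality as $\Pr[E_{k+1} \mid E_k] \le (1-p)^k$ --- are exactly the reductions the paper itself uses to pass from repulsion to the birthday inequality, but they are restatements, not progress. The actual content of the conjecture is that the \emph{sign} of this covariance is controlled at every density up to the phase transition, and none of the three routes you sketch delivers that: decay of correlations, as defined in the paper, bounds the \emph{magnitude} of correlations between local occupation events within the conditioned measure, which is a different object from the comparison between the conditioned and unconditioned measures that the repulsion inequality requires; a total-variation bound from rapid mixing likewise controls $\left| \E[V_k\mid E_k] - \E[V_k] \right|$ at best, never its sign.

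You correctly identify the main obstacle --- the fluid phase extends well beyond the regime where Mayer/cluster expansions converge, so the term-by-term positivity argument cannot reach the conjectured threshold --- but you do not overcome it; indeed the paper's own expansion-style argument (Theorem \ref{thm:sphere}) only reaches $\alpha \le 2^{-2-3d}$, exponentially far below any plausible critical density. The contrapositive route you suggest (turn a failure of repulsion into an obstruction to mixing or to correlation decay) is an interesting reframing, and is close in spirit to the consequence the paper draws from the conjecture (that the density where the birthday inequality fails upper-bounds the critical density), but you give no mechanism for extracting a slow-mixing bottleneck or a long-range correlation from a single negative covariance. In short: the statement remains a conjecture, and your proposal is a research plan rather than a proof.
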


The intuition behind Conjecture \ref{conj:phase} is that at a sufficiently low density, conditioning on particles being repulsed from each other should have an essentially local effect, and locally, conditioning on repulsion increases the volume covered by the union of balls around the particles.  However, beyond the fluid/solid phase transition, long-range correlations come into play, and conditioning on the repulsion of particles can force them into global lattice-like configurations with holes, and thus the volume covered may actually decrease.  Note that a model of random matchings of a given size on the $d$-dimensional lattice, the monomer-dimer model, does not exhibit a phase transition \cite{heilmann1972theory}, and Corollary \ref{cor:BImatch} shows that the birthday inequality holds at all densities.

Conjecture \ref{conj:phase} has several consequences.  First, it would give a mathematical proof that the hard sphere model in dimension $24$ undergoes a phase transition, which to the best of our knowledge has not been proved yet in any dimension.  The density at which the birthday inequality fails would mark an upper bound on the critical density for the model, as exponential decay of correlations (or fast mixing) could not hold.  

  Second, it would imply that the critical density in the fixed-size hard-core model is upper bounded by $\alpha_d^l$ from Theorem \ref{prop:BipFail}, in particular showing that a phase transition occurs at densities $O(1/d)$ on $\Z_d(n)$. The best known analogous bounds in the hard-core model with fugacity parameter $\lambda $ are $\lambda_c = \tilde O(d^{-1/3})$ given by Peled and Samotij \cite{peled2014odd} improving the bound of $\tilde O(d ^{-1/4})$ from Galvin and Kahn \cite{galvin2004phase}.  Proving Conjecture \ref{conj:phase} would give the optimal bound up to a constant factor $\lambda_c = O(d^{-1})$ as the typical particle density $\alpha$ in the hard-core model with fugacity $\lambda$ is bounded below by $\frac{\lam}{6 (1+ \lam)}$.

The next conjecture concerns independent sets in $d$-regular graphs. 

\begin{conj}
\label{conj:IS}
Suppose $2d$ divides $n$.  Let $H_{d,n}$ be the graph consisting of $n/(2d)$ disjoint copies of $K_{d,d}$, the complete bipartite graph on two sets of $d$ vertices. Then for all $1 \le k \le n$, $H_{d,n}$ minimizes the expected number of neighbors of a uniformly random independent set of size $k$ over all $d$-regular graphs on $n$ vertices. 
\end{conj}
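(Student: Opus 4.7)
\textbf{Proof proposal for Conjecture \ref{conj:IS}.}

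The plan is to first rewrite the objective so that it isolates a per-vertex quantity. For a uniformly random size-$k$ independent set $I$ of a $d$-regular graph $G$ on $n$ vertices, each vertex $v$ falls in exactly one of the three disjoint events $\{v \in I\}$, $\{v \in N(I)\}$, or $\{v \notin I \cup N(I)\}$, and summing their probabilities over $v$ gives
\[ n = k + \E[|N(I)|] + \sum_v q_v, \qquad q_v := \Pr[I \cap N[v] = \emptyset] = \frac{i_G(k;\, V \setminus N[v])}{i_G(k)}, \]
where $N[v]$ is the closed neighborhood of $v$ and $i_G(k;\,S)$ denotes the number of size-$k$ independent sets of $G$ contained in $S$. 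The conjecture is therefore equivalent to showing that $\sum_v i_G(k;\,V \setminus N[v])/i_G(k)$ is \emph{maximized} by $G = H_{d,n}$ among all $d$-regular graphs on $n$ vertices.

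To attack this, I would follow the entropy and occupancy-fraction methodology that has been used to show disjoint copies of $K_{d,d}$ are extremal for the independent set count in regular graphs (Kahn in the bipartite case, Zhao in general). First I would verify the target value on $H_{d,n}$ by exploiting its product structure: for $v$ in one of the $c = n/(2d)$ copies of $K_{d,d}$, the induced subgraph on $V \setminus N[v]$ is the disjoint union of $d-1$ isolated vertices together with the remaining $c-1$ intact copies of $K_{d,d}$, so both $i_G(k;\,V \setminus N[v])$ and $i_G(k)$ admit clean convolutional formulas in $d$, $c$, and $k$. Next I would pass to the grand canonical ensemble: for $\lambda > 0$ set $Z_G(\lambda) = \sum_I \lambda^{|I|}$ and $Z_G^v(\lambda) = \sum_{I:\,I \cap N[v] = \emptyset} \lambda^{|I|}$, and aim to prove the per-vertex, activity-level inequality $Z_G^v(\lambda)/Z_G(\lambda) \le Z_{H_{d,n}}^{v^*}(\lambda)/Z_{H_{d,n}}(\lambda)$ for every $\lambda > 0$. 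This is a local statement well-suited to an entropy argument on a balanced partition of $V(G)$, handled first in the bipartite case and then lifted to general $d$-regular $G$ via the bipartite double cover.

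The main obstacle is transferring this activity-level inequality back to the fixed-size statement in the conjecture. Coefficient extraction does not preserve generating function inequalities in general, so the argument would need to be supplemented either by a log-concavity property of the coefficient sequence of $Z_G^v(\lambda)/Z_G(\lambda)$ or by a local central limit theorem pinning $\lambda = \lambda(k,n,d)$ so that $i_G(k)$ is captured up to $1+o(1)$ by the relevant coefficient of $Z_G(\lambda)$; one expects such fluctuation control to be available precisely when the hard-core model is in its disordered phase, so this step dovetails naturally with Conjecture \ref{conj:phase}. A secondary difficulty is that the double-cover swap doubles vertices and hence the sum $\sum_v q_v$ in a nontrivial way, so matching cross terms between $G \times K_2$ and $H_{d,n} \times K_2 \cong H_{d,n} \sqcup H_{d,n}$ will require extra care to ensure that the extremum is attained precisely at $H_{d,n}$ rather than at some spurious bipartite competitor.
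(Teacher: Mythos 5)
Be aware that the statement you are addressing is stated in the paper as Conjecture~\ref{conj:IS}: the paper offers no proof of it, so there is nothing to compare your argument against, and your write-up is itself a research programme rather than a proof --- you say as much when you flag the passage from fixed activity to fixed size as ``the main obstacle.'' Your opening reduction is correct and worth keeping: for an independent set $I$ the sets $I$, $N(I)$ and $\{v : I \cap N[v] = \emptyset\}$ partition the vertex set, so minimizing $\E_G[V_k \mid E_k]$ is equivalent to maximizing $\sum_v \Pr[I \cap N[v] = \emptyset]$; and at the level of the grand canonical ensemble the \emph{averaged} form of your target is, via the identity $\sum_v Z_G^v(\lambda) = Z_G'(\lambda)$, exactly the assertion that $K_{d,d}$ maximizes the occupancy fraction $\lambda Z_G'(\lambda)/(n Z_G(\lambda))$ over $d$-regular graphs.

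Two of your steps are genuine gaps rather than technical loose ends. First, the per-vertex inequality $Z_G^v(\lambda)/Z_G(\lambda) \le Z_{H_{d,n}}^{v^*}(\lambda)/Z_{H_{d,n}}(\lambda)$ for \emph{every} $v$ is too strong to be the right intermediate statement: in a non-vertex-transitive $d$-regular graph this ratio (which equals $\Pr_\lambda[v \in I]/\lambda$) varies with $v$, and individual vertices can exceed the $K_{d,d}$ value even when the average does not; only the average over $v$ can be expected to hold, and it is not clear that an entropy argument on a balanced partition produces an averaged local inequality of this kind. Second, and more seriously, the transfer from fixed $\lambda$ to fixed $k$ is the entire content of the conjecture, not a supplementary lemma: $q_v$ is a ratio of coefficients of two different polynomials, log-concavity of such a ratio sequence is neither known nor obviously true, and a local CLT at the tilted activity $\lambda(k)$ would at best yield the inequality up to $(1+o(1))$ multiplicative error, which cannot establish an exact inequality claimed for every $k$ and every $n$, since the gap between $H_{d,n}$ and a competitor may be of smaller order than that error. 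Finally, the bipartite double cover reduction also breaks down at a basic level: Zhao's injection transfers the \emph{count} inequality $i(G)^2 \le i(G \times K_2)$ (even size by size), but it is not measure-preserving and does not respect the size of an independent set's closed neighbourhood, so it gives no way to compare the conditional expectations $\E[V_k \mid E_k]$ of $G$ and of its cover. Until the averaged activity-level inequality, the fixed-size transfer, and the bipartite-to-general reduction are each supplied, the conjecture remains open.
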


In the notation above, Conjecture~\ref{conj:IS} states $\E_{H_{d,n}} [ V_k | E_k ] \le \E_G[V_k | E_k]$ for any $d$-regular $G$ on $n$ vertices.  Conjecture~\ref{conj:IS} immediately implies a theorem of Kahn \cite{kahn2001entropy} and Zhao \cite{zhao2010number} that $H_{d,n}$ maximizes the total number of independent sets in any $d$-regular graph on $n$ vertices, and implies the conjecture of Kahn \cite{kahn2001entropy} that $H_{d,n}$ in fact maximizes the number of independent sets of size $k$, for every $k$. Conjecture~\ref{conj:IS} is in fact much stronger: it states that $H_{d,n}$ maximizes the ratio of the number of independent sets of size $k$ to the number of independent sets of size $k-1$ for all $k$.  The elementary proof of Corollary~\ref{cor:bipartite} suggests that proving Conjecture~\ref{conj:IS} may in fact be easier than trying to bound the number of independent sets of a given size directly.  

\section*{Acknowledgments} Thanks to the Institute for Mathematics and its Applications in Minneapolis, MN, where part of this work was done, for its generous support.  Thanks to Alfredo Hubard for many fruitful discussions on this topic.  Thanks to Peter Winkler and Tyler Helmuth for their careful reading of a draft of this paper.


\bibliographystyle{amsplain}

\begin{thebibliography}{10}

\bibitem{bezdek2003pushing}
K\'aroly Bezdek and Robert Connelly.
\newblock Pushing disks apart.
\newblock {\em J. Reine Angew. Math}, 553:221--236, 2003.

\bibitem{bowen2006fluid}
Lewis Bowen, Russell Lyons, Charles Radin, and Peter Winkler.
\newblock Fluid-solid transition, in a hard-core system.
\newblock {\em Physical review letters}, 96(2):025701, 2006.

\bibitem{carroll2009matchings}
Teena Carroll, David Galvin, and Prasad Tetali.
\newblock Matchings and independent sets of a fixed size in regular graphs.
\newblock {\em Journal of Combinatorial Theory, Series A}, 116(7):1219--1227,
  2009.
  
\bibitem{clevenson1991majorization}
M~Lawrence Clevenson and William Watkins.
\newblock Majorization and the birthday inequality.
\newblock {\em Mathematics Magazine}, pages 183--188, 1991.

\bibitem{cohn2004densest}
Henry Cohn and Abhinav Kumar.
\newblock The densest lattice in twenty-four dimensions.
\newblock {\em Electronic Research Announcements of the American Mathematical
  Society}, 10(7):58--67, 2004.

\bibitem{devroye2011high}
Luc Devroye, Andr{\'a}s Gy{\"o}rgy, G{\'a}bor Lugosi, and Frederic Udina.
\newblock High-dimensional random geometric graphs and their clique number.
\newblock {\em Electronic Journal of Probability}, 16:2481--2508, 2011.

\bibitem{diaconis2011geometric}
Persi Diaconis, Gilles Lebeau, and Laurent Michel.
\newblock Geometric analysis for the metropolis algorithm on lipschitz domains.
\newblock {\em Inventiones mathematicae}, 185(2):239--281, 2011.

\bibitem{galvin2004phase}
David Galvin and Jeff Kahn.
\newblock On phase transition in the hard-core model on $\mathbf {Z} ^d$.
\newblock {\em Combinatorics, Probability and Computing}, 13(02):137--164,
  2004.

\bibitem{hayes2014lower}
Thomas~P Hayes and Cristopher Moore.
\newblock Lower bounds on the critical density in the hard disk model via
  optimized metrics.
\newblock {\em arXiv preprint arXiv:1407.1930}, 2014.

\bibitem{heilmann1972theory}
Ole~J Heilmann and Elliott~H Lieb.
\newblock Theory of monomer-dimer systems.
\newblock {\em Communications in Mathematical Physics}, 25(3):190--232, 1972.

\bibitem{ilinca2013asymptotics}
Liviu Ilinca and Jeff Kahn.
\newblock Asymptotics of the upper matching conjecture.
\newblock {\em Journal of Combinatorial Theory, Series A}, 120(5):976--983,
  2013.

\bibitem{kahn2001entropy}
Jeff Kahn.
\newblock An entropy approach to the hard-core model on bipartite graphs.
\newblock {\em Combinatorics, Probability and Computing}, 10(03):219--237,
  2001.

\bibitem{kannan2003rapid}
Ravi Kannan, Michael~W Mahoney, and Ravi Montenegro.
\newblock Rapid mixing of several markov chains for a hard-core model.
\newblock In {\em Algorithms and computation}, pages 663--675. Springer, 2003.

\bibitem{kneser1955einige}
Martin Kneser.
\newblock Einige bemerkungen {\"u}ber das minkowskische fl{\"a}chenmass.
\newblock {\em Archiv der Mathematik}, 6(5):382--390, 1955.

\bibitem{leech1967notes}
John Leech.
\newblock Notes on sphere packings.
\newblock {\em Canad. J. Math}, 19(251):267, 1967.

\bibitem{lowen2000fun}
Hartmut L{\"o}wen.
\newblock Fun with hard spheres.
\newblock In {\em Statistical physics and spatial statistics}, pages 295--331.
  Springer, 2000.

\bibitem{peled2014odd}
Ron Peled and Wojciech Samotij.
\newblock Odd cutsets and the hard-core model on $\mathbf {Z} ^d$.
\newblock {\em Annales de l'Institut Henri Poincare, Probabilites et
  Statistiques}, 50:975--998, 2014.

\bibitem{poulsen1954problem}
E~Thue Poulsen.
\newblock Problem 10.
\newblock {\em Mathematica Scandinavica}, 2:346--346, 1954.

\bibitem{radin1987low}
Charles Radin.
\newblock Low temperature and the origin of crystalline symmetry.
\newblock {\em International Journal of Modern Physics B}, 1(05n06):1157--1191,
  1987.

\bibitem{randall2005mixing}
Dana Randall and Peter Winkler.
\newblock Mixing points on a circle.
\newblock In {\em Approximation, Randomization and Combinatorial Optimization.
  Algorithms and Techniques}, pages 426--435. Springer, 2005.

\bibitem{tonks1936complete}
Lewi Tonks.
\newblock The complete equation of state of one, two and three-dimensional
  gases of hard elastic spheres.
\newblock {\em Physical Review}, 50(10):955, 1936.

\bibitem{zhao2010number}
Yufei Zhao.
\newblock The number of independent sets in a regular graph.
\newblock {\em Combinatorics, Probability and Computing}, 19(02):315--320,
  2010.

\end{thebibliography}

\end{document}